\documentclass{amsart}
\usepackage{amssymb}
\usepackage{bm}
\usepackage{graphicx}
\usepackage[centertags]{amsmath}
\usepackage{amsfonts}
\usepackage{amsthm}
\linespread{1.18}


\newtheorem{thm}{Theorem}
\newtheorem{cor}[thm]{Corollary}
\newtheorem{lem}[thm]{Lemma}

\newtheorem{claim}[thm]{Claim}
\newtheorem{subclaim}[thm]{Subclaim}
\newtheorem{fact}[thm]{Fact}
\newtheorem{defn}[thm]{Definition}
\theoremstyle{definition}


\newcommand{\rr}{\mathbb{R}}
\newcommand{\nn}{\mathbb{N}}
\newcommand{\qq}{\mathbb{Q}}
\newcommand{\ee}{\varepsilon}


\newcommand{\tr}{\mathrm{Tr}}
\newcommand{\wf}{\mathrm{WF}}
\newcommand{\bt}{\nn^{<\nn}}
\newcommand{\ibt}{[\nn]^{<\nn}}
\newcommand{\bs}{\nn^\nn}
\newcommand{\sg}{\sigma}
\newcommand{\seg}{\mathfrak{s}}
\newcommand{\con}{\smallfrown}


\newcommand{\aaa}{\mathcal{A}}
\newcommand{\bbb}{\mathcal{B}}

\newcommand{\fff}{\mathcal{F}}
\newcommand{\gggg}{\mathcal{G}}
\newcommand{\hhh}{\mathcal{H}}
\newcommand{\llll}{\mathcal{L}}
\newcommand{\ppp}{\mathcal{P}}
\newcommand{\sss}{\mathcal{S}}
\newcommand{\rs}{\mathrm{r}_{\sss}}
\newcommand{\irm}{\mathrm{I}}



\newcommand{\supp}{\mathrm{supp}}

\newcommand{\sspan}{\mathrm{span}}

\begin{document}

\title{On strictly singular operators between separable Banach spaces}
\author{Kevin Beanland and Pandelis Dodos}

\address{Department of Mathematics and Applied Mathematics, Virginia Commonwealth
University, Richmond, VA 23284.}
\email{kbeanland@vcu.edu}

\address{Department of Mathematics, University of Athens, Panepistimiopolis 157 84, Athens, Greece.}
\email{pdodos@math.ntua.gr}

\thanks{2010 \textit{Mathematics Subject Classification}. Primary: 46B28; Secondary: 03E15.}
\thanks{\textit{Key words}: strictly singular operators, ordinal ranks.}
\thanks{The second named author was supported by NSF grant DMS-0903558.}
\maketitle


\begin{abstract}
Let $X$ and $Y$ be separable Banach spaces and denote by $\sss\sss(X,Y)$
the subset of $\llll(X,Y)$ consisting of all strictly singular operators.
We study various ordinal ranks on the set $\sss\sss(X,Y)$. Our main results
are summarized as follows. Firstly, we define a new rank $\rs$ on $\sss\sss(X,Y)$.
We show that $\rs$ is a co-analytic rank and that dominates the
rank $\varrho$ introduced by Androulakis, Dodos, Sirotkin and Troitsky
[Israel J. Math., 169 (2009), 221-250]. Secondly, for every $1\leq p<+\infty$
we construct a Banach space $Y_p$ with an unconditional basis such that
$\sss\sss(\ell_p, Y_p)$ is a co-analytic non-Borel subset of $\llll(\ell_p,Y_p)$
yet every strictly singular operator $T:\ell_p\to Y_p$ satisfies $\varrho(T)\leq 2$.
This answers a question of Argyros.
\end{abstract}


\section{Introduction}

An operator $T:X\to Y$ between two infinite-dimensional Banach spaces $X$ and $Y$
is said to be \textit{strictly singular} if the restriction of $T$ on every
infinite-dimensional subspace $Z$ of $X$ is not an isomorphic embedding (throughout
the paper by the term \textit{operator} we mean bounded linear operator;
all Banach spaces are over the real field). This is a wide class of operators
between Banach spaces that includes the compact ones. A number of discoveries,
especially after the work of W. T. Gowers and B. Maurey
\cite{GM}, have revealed the critical role of strictly singular operators on the
structure theory of general Banach spaces.

Notice that an operator $T:X\to Y$ is strictly singular if and only if
for every normalized basic sequence $(x_n)$ in $X$ and every $\ee>0$
there exist a non-empty finite subset $F$ of $\nn$ and a norm-one
vector $x\in\sspan\{x_n:n\in F\}$ such that $\|T(x)\|\leq\ee$.
This equivalence gives us no hint of where the set $F$ is located and,
in particular, of how ``difficult" it is to find it. Recently, the notion
of a strictly singular operator was refined in order to measure this difficulty.
The refinement was achieved with the use of the Schreier families
$\sss_\xi$ $(1\leq \xi<\omega_1)$ introduced in \cite{AA}.
\begin{defn}[\cite{ADST}] \label{1d1}
Let $X, Y$ be infinite-dimensional Banach spaces,
$T\in\llll(X,Y)$ and $1\leq \xi<\omega_1$.
The operator $T$ is said to be \emph{$\sss_\xi$-strictly singular}
if for every normalized basic sequence $(x_n)$ in $X$ and every
$\ee>0$ there exist a non-empty set $F\in\sss_\xi$ and a norm-one vector
$x\in\sspan\{x_n:n\in F\}$ such that $\|T(x)\|\leq\ee$.

For every $T\in\llll(X,Y)$ we set
\begin{equation} \label{1e1}
\varrho(T)= \inf\{\xi: T \text{ is } \sss_\xi\text{-strictly singular}\}
\end{equation}
if $T$ is $\sss_\xi$-strictly singular for some $1\leq \xi<\omega_1$;
otherwise we set $\varrho(T)=\omega_1$.
\end{defn}
A basic fact, proved in \cite{ADST}, is that if $X$ and $Y$ are separable,
then an operator $T:X\to Y$ is strictly singular if and only if
$\varrho(T)<\omega_1$ (this equivalence fails if $X$ and $Y$ are
non-separable; see \cite{ADST}). In particular, the map $T\mapsto\varrho(T)$
is an ordinal rank\footnote[1]{Ordinal ranks are standard tools in Banach Space
Theory; see, for instance, \cite{AGR,Bou,O,Sz}.} on the set $\sss\sss(X,Y)$
of all strictly singular operators from $X$ to $Y$. It was
further studied in \cite{AB,Be,CFPTT,P}.

In the present paper we continue the study of the rank $\varrho$
by focusing on its global properties. These kind of questions are
naturally studied within the framework of Descriptive Set
Theory (we briefly recall in \S 2.1, \S 2.2 and \S 2.3
all concepts from Descriptive Set Theory related to our work).
To put things in a proper perspective, let us first notice that
if $X$ and $Y$ are separable Banach spaces, then the set $\llll(X,Y)$
carries a natural structure of a standard Borel space (see \S 2.2)
and it is easy to see that $\sss\sss(X,Y)$ is a co-analytic
subset of $\llll(X,Y)$. Once the proper framework has been set up,
a basic problem is to decide whether the rank $\varrho$ is actually
a co-analytic rank on the set $\sss\sss(X,Y)$. Co-analytic ranks
are fundamental tools in Descriptive Set Theory and have proven
to be extremely useful in studying the geometry of Banach spaces
(see, for instance, \cite{AD,AGR,Bos,Dnew,D,DF}).

As we shall see, the rank $\varrho$ is not, in general, a co-analytic
rank. Our first main result shows, however, that it is always
sufficiently well-behaved.
\begin{thm} \label{1t2}
Let $X$ and $Y$ be separable Banach spaces. Then there exists
a co-analytic rank $\rs:\sss\sss(X,Y)\to\omega_1$ such that
\begin{equation} \label{1e2}
\varrho(T)\leq \rs(T)
\end{equation}
for every strictly singular operator $T:X\to Y$.

In particular, the rank $\varrho$ satisfies boundedness; that is,
if $\aaa$ is an analytic subset of $\sss\sss(X,Y)$, then
$\sup\{\varrho(T):T\in\aaa\}<\omega_1$.
\end{thm}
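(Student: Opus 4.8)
The plan is to construct a co-analytic rank $\rs$ that dominates $\varrho$, and to deduce the boundedness assertion as an immediate consequence via the standard "boundedness theorem" for co-analytic ranks. Let me think through how I would build $\rs$ and verify its properties.

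**Understanding the structure.** Let me recall what a co-analytic rank is. Given a co-analytic set $\aaa$ in a standard Borel space, a map $\phi: \aaa \to \omega_1$ is a co-analytic rank if there exist a $\SB^1_1$ relation $\leq_\Sigma$ and a $\PB^1_1$ relation $\leq_\Pi$ on the whole space such that for every $y \in \aaa$ and every $x$: $\phi(x) \leq \phi(y)$ (with $x \in \aaa$) iff $x \leq_\Sigma y$ iff $x \leq_\Pi y$. The boundedness theorem then states: if $\phi$ is a co-analytic rank on a co-analytic set $\aaa$, then for every analytic $\bbb \subseteq \aaa$, $\sup_{x \in \bbb} \phi(x) < \omega_1$. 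So once I produce $\rs$, the "In particular" clause follows for free, because $\aaa \subseteq \sss\sss(X,Y)$ analytic forces $\sup_{T \in \aaa}\rs(T) < \omega_1$, and then $\varrho(T) \leq \rs(T)$ gives the bound on $\varrho$.

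**Strategy for constructing $\rs$.** The natural approach is to encode the failure of $\sss_\xi$-strict-singularity by a well-founded tree. I would fix a dense sequence in $X$ (or work with a countable dense set of normalized basic sequences and rationals $\ee$) and build, for each operator $T$, a tree $\tr(T)$ on an appropriate countable alphabet whose nodes record finite attempts to witness that $T$ behaves like an isomorphic embedding on spans indexed outside the Schreier families — essentially coding the "search for $F \in \sss_\xi$ with a small-norm vector" and its failure. The key design requirement is that $T$ is strictly singular exactly when this tree is well-founded, and that the ordinal rank of the tree dominates $\varrho(T)$. I would then set $\rs(T)$ to be (a suitable normalization of) the tree rank, i.e. the ordinal height of $\tr(T)$. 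Since the Schreier families $\sss_\xi$ carry ordinal indices and the $\sss_\xi$-strict-singularity condition is captured by the Schreier trees, the tree height naturally produces an ordinal $\geq \varrho(T)$.

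**Verifying co-analyticity of the rank.** This is where the main work lies. I would first show the map $T \mapsto \tr(T)$ is Borel from $\llll(X,Y)$ into the standard Borel space $\tr$ of trees on the alphabet; the definition of the tree uses norm inequalities $\|T(x)\| \leq \ee$ evaluated on a fixed countable dense set of vectors, and in the Effros/standard Borel structure on $\llll(X,Y)$ (see §2.2) each such condition is Borel, so the tree membership relation "$s \in \tr(T)$" is Borel in $(s,T)$. Then $\sss\sss(X,Y) = \{T : \tr(T) \in \wf\}$ where $\wf$ is the (co-analytic) set of well-founded trees, exhibiting $\sss\sss(X,Y)$ as the Borel preimage of a co-analytic set, hence co-analytic. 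The comparison relations $\rs(S) \leq \rs(T)$ reduce to the existence of an order-preserving map between the trees $\tr(S)$ and $\tr(T)$ (an embedding of the rank structure), which is analytic, while the $\PB^1_1$ side comes from the standard fact that the tree-rank function $T \mapsto |\tr(T)|$ on well-founded trees is a canonical co-analytic rank. The hard part will be arranging the combinatorial definition of the tree so that well-foundedness corresponds \emph{exactly} to strict singularity (using the equivalence from \cite{ADST} that strict singularity is equivalent to $\varrho(T) < \omega_1$) while simultaneously guaranteeing the inequality $\varrho(T) \leq \rs(T)$ — this forces the tree to faithfully encode \emph{all} the Schreier levels $\sss_\xi$ at once, and getting the bookkeeping of the alphabet, the dense sequences, and the $\ee$-parameters to mesh with the ordinal indexing of the $\sss_\xi$ is the delicate technical core.

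<br>

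With $\rs$ in hand, the boundedness statement is immediate: given an analytic $\aaa \subseteq \sss\sss(X,Y)$, the boundedness theorem applied to the co-analytic rank $\rs$ yields $\sup\{\rs(T) : T \in \aaa\} < \omega_1$, and combining with $\varrho(T) \leq \rs(T)$ from \eqref{1e2} gives $\sup\{\varrho(T) : T \in \aaa\} < \omega_1$, as required.
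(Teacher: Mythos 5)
Your outer framework is correct: the ``in particular'' clause does follow immediately from the boundedness theorem once a co-analytic rank $\rs$ dominating $\varrho$ exists, the standard Borel structure on $\llll(X,Y)$ makes norm conditions on countably many vectors Borel in $T$, and a Borel map into trees whose well-founded part is exactly $\sss\sss(X,Y)$ is indeed the right device. But there is a genuine gap, and it sits exactly where you place it yourself: you never define the tree and never prove the inequality $\varrho(T)\leq\rs(T)$. That inequality is the entire mathematical content of the theorem --- everything else in your proposal is standard descriptive-set-theoretic machinery --- so deferring it as ``the delicate technical core'' leaves the theorem unproved. Moreover, the two ideas you offer for closing the gap are problematic. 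First, reducing to ``a countable dense set of normalized basic sequences'' is not harmless: for $\zeta<\varrho(T)$ the witness of non-$\sss_\zeta$-strict-singularity is an arbitrary normalized basic sequence in $X^\nn$, and transferring it into a fixed countable alphabet requires a perturbation argument (summable errors, control of basis constants, stability of the isomorphism inequality) that you do not supply. Second, a tree that ``faithfully encodes all the Schreier levels $\sss_\xi$ at once'' cannot be taken literally: the families $\sss_\xi$, $\xi<\omega_1$, are uncountably many and are defined by transfinite recursion, so they cannot be written into the node structure of a single tree on a countable alphabet.

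The paper resolves both issues by a different mechanism, which is worth contrasting with yours. Its trees $S(T,(x_n),m)$ in \eqref{4e6} make no reference to Schreier families at all; instead the defining condition is imposed on all spreads of all initial segments, so that each $S(T,(x_n),m)$ is automatically a hereditary and spreading (hence regular) family. This is the key design point: if $\zeta<\varrho(T)$, a witness $(x_n)\in\bbb$, $\ee\geq 1/m$ of non-$\sss_\zeta$-strict-singularity gives $\sss_\zeta\subseteq S(T,(x_n),m)$, so the identity is a monotone map and $\omega^\zeta=o(\sss_\zeta)\leq o\big(S(T,(x_n),m)\big)$ (Corollary \ref{4c17}). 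Crucially, the parameter $(x_n)$ ranges over the \emph{uncountable} standard Borel space $\bbb$ of all normalized basic sequences, so no dense-set reduction is needed; the price is that one no longer has a single tree to pull the canonical rank back along. The paper pays that price with Fact \ref{2f6}: the set $\ppp=\{(T,R):R=S(T,(x_n),m)\text{ for some }(x_n),m\}$ is analytic, $\ppp^{\sharp}=\sss\sss(X,Y)$ (Fact \ref{4f15} and Claim \ref{4c16}), and the parameterized boundedness/reflection theorem yields a co-analytic rank $\varphi$ with $\sup\{o(R):(T,R)\in\ppp\}\leq\varphi(T)$; then $\rs=\varphi+1$ works because $\xi\leq\sup\{\omega^\zeta:\zeta<\xi\}+1$. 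None of these ingredients --- the spreading closure in the tree definition, the monotone embedding of $\sss_\zeta$ together with $o(\sss_\zeta)=\omega^\zeta$, and the reflection step replacing your ``pull back the tree rank'' step --- appears in your proposal, so what you have is a correct shell around a missing proof.
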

As a consequence we get the following.
\begin{cor}[\cite{Be}] \label{1c3}
If $X$ and $Y$ are separable Banach spaces and $\sss\sss(X,Y)$
is a Borel subset of $\llll(X,Y)$, then
$\sup\{\varrho(T):T\in\sss\sss(X,Y)\}<\omega_1$.
\end{cor}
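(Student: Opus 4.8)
The plan is to read off the corollary as an immediate specialization of Theorem \ref{1t2}, whose ``in particular'' clause is tailor-made for the situation. The key point is simply to feed the boundedness assertion the correct analytic set.

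First I would recall the elementary fact that every Borel subset of a standard Borel space is analytic; the excerpt already records that $\llll(X,Y)$ carries a natural standard Borel structure. Consequently, the hypothesis that $\sss\sss(X,Y)$ is a Borel subset of $\llll(X,Y)$ guarantees that $\sss\sss(X,Y)$ is itself an analytic set. I would then apply the boundedness part of Theorem \ref{1t2} with $\aaa=\sss\sss(X,Y)$, which is trivially an analytic subset of $\sss\sss(X,Y)$; this yields $\sup\{\varrho(T):T\in\sss\sss(X,Y)\}<\omega_1$, exactly as claimed.

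Since the corollary is a direct consequence of the theorem, there is no substantive obstacle at this level: all the real work is hidden inside Theorem \ref{1t2}, namely in the construction of the co-analytic rank $\rs$ dominating $\varrho$ and in the $\SB^1_1$-boundedness theorem for co-analytic ranks, both of which the theorem already supplies. The only thing to verify at the level of the corollary is the trivial implication that Borel sets are analytic, which ensures that the hypothesis of the corollary correctly matches the input required by the boundedness clause.
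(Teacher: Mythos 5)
Your proposal is correct and matches the paper's intended argument exactly: the paper presents Corollary \ref{1c3} as an immediate consequence of Theorem \ref{1t2}, namely that $\sss\sss(X,Y)$ being Borel (hence analytic) lets one apply the boundedness clause with $\aaa=\sss\sss(X,Y)$. Nothing further is needed.
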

A natural problem, originally asked by S. A. Argyros, is whether
the converse of Corollary \ref{1c3} is true. In particular,
it was conjectured in \cite[\S 4.5]{Be} that if $X$ and $Y$
are separable Banach spaces and
$\sup\{\varrho(T):T\in\sss\sss(X,Y)\}<\omega_1$, then
$\sss\sss(X,Y)$ is a Borel subset of $\llll(X,Y)$. Our
second main result answers this question in the negative.
\begin{thm} \label{1t4}
Let $1\leq p<+\infty$. Then there exists a Banach space
$Y_p$ with an unconditional basis such that the following
are satisfied.
\begin{enumerate}
\item[(i)] The set $\sss\sss(\ell_p,Y_p)$ is a complete
co-analytic (in particular non-Borel) subset of $\llll(\ell_p,Y_p)$.
\item[(ii)] If $T:\ell_p\to Y_p$ is strictly singular, then $\varrho(T)\leq 2$.
\end{enumerate}
In particular, the rank $\varrho$ is not a co-analytic rank
on $\sss\sss(\ell_p, Y_p)$.
\end{thm}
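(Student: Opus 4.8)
The plan is to construct, for each $1 \leq p < +\infty$, a Banach space $Y_p$ with an unconditional basis that simultaneously realizes the two conflicting requirements: the collection of strictly singular operators into it must be as complex as possible from the descriptive set-theoretic standpoint (complete co-analytic), while the rank $\varrho$ must nonetheless remain trivially bounded (by $2$) on this collection. The overall architecture I would follow is to reduce a known complete co-analytic set — the set $\wf$ of well-founded trees on $\nn$, whose completeness is classical — to $\sss\sss(\ell_p, Y_p)$ via a Borel map.

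First I would build the space $Y_p$ from a tree-indexed family of building blocks. Concretely, I would arrange $Y_p$ so that to each node (or each level) of the infinite branching tree $\bt$ there corresponds a finite-dimensional or one-dimensional coordinate piece, assembled with an unconditional basis and with an $\ell_p$-type or mixed norm so that $Y_p$ has a natural supply of block subspaces indexed by branches of $\bt$. The goal of the construction is that a canonical ``diagonal'' operator $T_\sigma : \ell_p \to Y_p$, associated to a tree $\sigma \subseteq \bt$, should be an isomorphic embedding on some infinite-dimensional subspace exactly when $\sigma$ is ill-founded (i.e.\ has an infinite branch), and should be strictly singular exactly when $\sigma \in \wf$. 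The map $\sigma \mapsto T_\sigma$ should be Borel from the standard Borel space of trees into $\llll(\ell_p, Y_p)$; combined with completeness of $\wf$, this gives part (i), namely that $\sss\sss(\ell_p, Y_p)$ is complete co-analytic and hence non-Borel.

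Second, and independently, I would verify part (ii): that \emph{every} strictly singular $T : \ell_p \to Y_p$ satisfies $\varrho(T) \leq 2$, i.e.\ is $\sss_2$-strictly singular. The key structural input here is the rigidity of $\ell_p$ as a domain: every normalized basic sequence in $\ell_p$ has a subsequence equivalent to the unit vector basis of $\ell_p$, so normalized blocks are essentially $\ell_p$-averages. The plan is to show that for such a space $Y_p$, strict singularity of $T$ already forces, for every normalized basic sequence $(x_n)$ and every $\ee > 0$, a norm-one vector $x$ supported on a set $F \in \sss_2$ with $\|T(x)\| \leq \ee$ — one would extract this from the interaction between $\ell_p$-averages in the domain and the particular norm of $Y_p$, using that a small (Schreier-$\sss_2$ sized) average already produces a vector of small image. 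This is where the construction of $Y_p$ must be tuned carefully: the norm on $Y_p$ has to be simultaneously rich enough to encode ill-foundedness (for (i)) yet shallow enough that the ``difficulty'' of witnessing strict singularity never exceeds level $2$ of the Schreier hierarchy (for (ii)).

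The main obstacle, as I see it, is reconciling these two demands in a single norm. The completeness in (i) requires that non-strict-singularity be detectable only along infinite branches of arbitrary trees, which intuitively pushes toward high complexity and suggests the possibility of large $\varrho$-values; yet (ii) demands that whenever $T$ \emph{is} strictly singular, the witnessing set is always found at Schreier level $2$. Threading this needle is exactly the point of the theorem, since it shows $\varrho$ bounded while $\sss\sss$ is non-Borel, contradicting the conjectured converse of Corollary \ref{1c3}. The decisive technical step will therefore be the careful definition of the norm of $Y_p$ — likely a Tsirelson-type or interpolation-type norm indexed by the tree — together with the verification that on $\ell_p$-averages the image norm degenerates already for $\sss_2$-admissible supports, so that the embedding phenomenon responsible for complexity lives entirely on the ill-founded (non-strictly-singular) side and never inflates $\varrho$ on the strictly-singular side. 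Granting these, the final ``in particular'' clause is immediate: a co-analytic rank must satisfy boundedness on the whole set, so a bounded $\varrho$ on a non-Borel co-analytic set cannot be a co-analytic rank.
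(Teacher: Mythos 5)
Your proposal reproduces the paper's strategic skeleton correctly: $Y_p$ is built over the tree $\bt$, the reduction is a Borel map from $\tr$ to $\llll(\ell_p,Y_p)$ sending $S$ to an operator that embeds $\ell_p$ along an infinite branch when $S\notin\wf$ and is strictly singular when $S\in\wf$, and the final clause follows from boundedness of co-analytic ranks. But the proposal stops at exactly the two points where the mathematics lives, so there are genuine gaps. First, the space is never defined, and your guess that the norm should be ``Tsirelson-type or interpolation-type'' points in the wrong direction. The paper's space is a James-tree variant $Z_{p,q}$ with $q=2p$: the norm of $z\in c_{00}(\bt)$ is the supremum, over families $(\seg_i)_{i=1}^d$ of pairwise incomparable segments, of the $\ell_q$-aggregate of the $\ell_p$-norms of $z$ along each $\seg_i$. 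Everything hinges on the exponent gap $p<q$, not on any admissibility or implicit-equation mechanism: along a branch $\sg$ the subspace $Z^\sg_{p,q}$ is isometric to $\ell_p$ (so ill-founded $S$ gives, via $H(S)=P_S\circ\irm$, an isometric embedding on a subspace of $\ell_p$), while for well-founded $S$ an induction on $o(S)$ shows $Z^S_{p,q}$ is finite-dimensional or hereditarily $\ell_q$, hence $H(S)$ is strictly singular because $p\neq q$. A Tsirelson-type norm would moreover be self-defeating for part (ii), since its admissibility constraints are exactly what inflates Schreier-type indices.

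Second, your part (ii) is a restatement of the goal rather than an argument: the sentence that a ``Schreier-$\sss_2$ sized average already produces a vector of small image'' is precisely what must be proved, and the proof needs ingredients your plan does not anticipate. The paper first reduces (Lemma \ref{3l12}) to pairs of $T$-compatible block sequences $(x_n)$, $(y_n)$; then comes the decisive dichotomy (Lemma \ref{6l25}): if some branch projection $\|P_\sg(y_n)\|$ stays bounded below along a subsequence, the lower $\ell_p$-estimate along the chain $\sg$ makes $T$ an isomorphic embedding on the span of the corresponding $x_n$, contradicting strict singularity; hence $\|P_\sg(y_n)\|\to 0$ for every branch $\sg$. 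A Ramsey-type extraction (Lemma \ref{5l21}, in the spirit of Amemiya--Ito) upgrades this to an \emph{asymptotically sparse} subsequence, i.e. each branch nearly meets at most one $y_n$ significantly. Finally, a two-level averaging argument is what pins the rank at $2$: inner averages over sets $F_i$ with $|F_i|=k_i\leq\min F_i$ (level one of the Schreier hierarchy) use sparseness to make every segment projection of the block $z_i=k_i^{-1/p}\sum_{n\in F_i}y_n$ small, and the outer average over $N\leq\min F_1$ such blocks (level two) uses pairwise incomparability of segments plus the $\ell_q$-aggregation to bound the image by roughly $N^{1/q}\cdot 2\Theta$, against the exact normalization $\|\sum_{n\in F}a_nx_n\|_{\ell_p}=1$ in the domain; since $1/q-1/p<0$, choosing $N$ large finishes the proof. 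This interplay --- vanishing branch projections, sparseness, and the $N^{1/q}$ versus $N^{1/p}$ discrepancy --- is the content of the theorem, and none of it is present or replaceable by the general ``rigidity of $\ell_p$'' you invoke.
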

The paper is organized as follows. In \S 2 we gather some
background material. In \S 3 we give a criterion for checking that
$\varrho(T)\leq\xi$ when the spaces $X$ and $Y$ have Schauder bases.
In \S 4 we give the proof of Theorem \ref{1t2}. In \S 5 we introduce
a class of spaces $Z_{p,q}$ $(1\leq p\leq q<+\infty)$ which are needed
in the proof of Theorem \ref{1t4}. Finally, the proof of Theorem
\ref{1t4} is given in \S 6.


\section{Background material}

Our general notation and terminology is standard as can be found, for instance,
in \cite{LT} and \cite{Kechris}. By $\nn=\{1,2,...\}$ we shall denote the
natural numbers. For every infinite subset $L$ of $\nn$ by $[L]$ we denote the
set of all infinite subsets of $L$. If $F$ and $G$ are two non-empty finite
subsets of $\nn$ we write $F<G$ if $\max F<\min G$. Finally, for every set
$A$ by $|A|$ we denote the cardinality of $A$.

\subsection{Trees}

By $\bt$ we denote the set of all finite sequences of natural numbers
while by $\ibt$ we denote the subset of $\bt$ consisting of all strictly
increasing finite sequences (the empty sequence is denoted by $\varnothing$
and is included in both $\bt$ and $\ibt$). We will use the letters $s$ and $t$
to denote elements of $\bt$. By $\sqsubset$ we shall denote the (strict) partial
order on $\bt$ of end-extension. If $\sg\in \bs$ and $k\in\nn$, then we set
$\sg|k=\big(\sg(1),...,\sg(k)\big)$; by convention $\sg|0=\varnothing$.

A \textit{tree on} $\nn$ is a subset of $\bt$ which is closed under initial
segments. By $\tr$ we denote the set of all trees on $\nn$. Hence
\[ S\in\tr \Leftrightarrow \forall s,t\in\bt \ (s\sqsubseteq t
\text{ and } t\in S\Rightarrow s\in S). \]
Notice that $\tr$ is a closed subset of the compact metrizable space $2^{\bt}$.
Also notice that $\ibt\in\tr$. We will reserve the letters $S$ and $R$ to denote
trees. The \textit{body} of a tree $S$ on $\nn$ is defined to be the set
$\{\sigma\in\bs: \sigma|n\in S \ \forall n\in\nn\}$ and is denoted
by $[S]$. A tree $S$ is said to be \textit{well-founded} if $[S]=\varnothing$.
By $\wf$ we denote the set of all well-founded trees on $\nn$.
For every $S\in\wf$ we set
\[ S'=\{s\in S: \exists t\in S \text{ with } s\sqsubset t\}\in\wf.\]
By transfinite recursion, we define the iterated derivatives
$S^{\xi}$ $(\xi<\omega_1)$ of $S$. The \textit{order} $o(S)$ of $S$
is defined to be the least ordinal $\xi$ such that $S^{\xi}=\varnothing$.
By convention, we set $o(S)=\omega_1$ if $S\notin\wf$.

Let $S$ and $R$ be trees on $\nn$. A map $\psi:S\to R$ is said to
be \textit{monotone} if for every $s, s'\in S$ with $s\sqsubset s'$ we have
$\psi(s)\sqsubset \psi(s')$. We notice that if there exists a monotone map
$\psi:S\to R$ and $R$ is well-founded, then $S$ is well-founded and $o(S)\leq o(R)$.

\subsection{Standard Borel spaces}

Let $(X, \Sigma)$ be a standard Borel space; that is, $X$ is a set, $\Sigma$
is a $\sigma$-algebra on $X$ and the measurable space $(X, \Sigma)$ is Borel
isomorphic to the reals. A subset $A$ of $X$ is said to be \textit{analytic}
if there exists a Borel map $f:\nn^\nn\to X$ with $f(\nn^\nn)=A$. A subset
of $X$ is said to be \textit{co-analytic} if its complement is analytic.

A natural, and relevant for our purposes, example of a standard Borel space is
the following. Let $X$ and $Y$ be separable Banach spaces and denote by $\Sigma$
the $\sigma$-algebra on $\llll(X,Y)$ of all Borel subsets of $\llll(X,Y)$ where
$\llll(X,Y)$ is equipped with the strong operator topology. It is well-known and
easy to prove that the measurable space $\big(\llll(X,Y),\Sigma\big)$ is standard
(see \cite[page 80]{Kechris} for more details).

\subsection{Complete co-analytic sets and co-analytic ranks}

Let $B$ be a co-analytic subset of a standard Borel space $X$. The set $B$ is said
to be \textit{co-analytic complete} if for every co-analytic subset $C$ of a
standard Borel space $Y$ there exists a Borel map $f:Y\to X$ such that
$f^{-1}(B)=C$. It is well-known that a complete co-analytic
set is not Borel. We will need the following well-known fact. Its proof is
based on the classical result that the set $\wf$ is co-analytic complete
(see \cite[Theorem 27.1]{Kechris}).
\begin{fact} \label{2f5}
Let $B$ be a co-analytic subset of a standard Borel space $X$. Assume that
there exists a Borel map $h:\tr\to X$ such that $h^{-1}(B)=\wf$.
Then $B$ is complete.
\end{fact}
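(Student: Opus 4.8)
The plan is to unwind the definition of co-analytic completeness and obtain the desired reduction by composing two Borel maps. Fix an arbitrary co-analytic subset $C$ of a standard Borel space $Y$; the goal is to produce a Borel map $f:Y\to X$ satisfying $f^{-1}(B)=C$, for then $B$ is complete by definition. The two ingredients will be the hypothesized map $h:\tr\to X$ with $h^{-1}(B)=\wf$, and the classical fact that $\wf$ is itself co-analytic complete.

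First I would invoke the result cited as \cite[Theorem 27.1]{Kechris}, namely that $\wf$ is a complete co-analytic subset of $\tr$. Applying the definition of completeness to the co-analytic set $C\subseteq Y$, this yields a Borel map $g:Y\to\tr$ with $g^{-1}(\wf)=C$. Next I would simply set $f=h\circ g:Y\to X$. Since $g$ and $h$ are both Borel, so is their composition $f$.

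It then remains to check the reduction identity, which is pure set-theoretic bookkeeping:
\begin{equation*}
f^{-1}(B)=g^{-1}\big(h^{-1}(B)\big)=g^{-1}(\wf)=C,
\end{equation*}
using the hypothesis $h^{-1}(B)=\wf$ in the middle equality and the choice of $g$ in the last. Since $C$ was an arbitrary co-analytic subset of an arbitrary standard Borel space $Y$, this exhibits $B$ as co-analytic complete.

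There is essentially no obstacle in this argument once the framework of \S 2.3 is in place: the entire mathematical content is absorbed into the cited completeness of $\wf$, and the remainder is the observation that complete co-analytic sets are closed under Borel preimages in the sense relevant here, i.e.\ that a Borel reduction of $C$ to $\wf$ composed with a Borel reduction of $\wf$ to $B$ gives a Borel reduction of $C$ to $B$. The only point requiring the hypothesis is that $h$ reduces exactly $\wf$ to $B$ (equality of preimages, not merely an inclusion), which is what makes the chained identity hold on the nose.
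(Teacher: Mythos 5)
Your proof is correct and is precisely the argument the paper has in mind: the paper states without details that Fact \ref{2f5} follows from the co-analytic completeness of $\wf$ (\cite[Theorem 27.1]{Kechris}), and your composition $f = h\circ g$ with the identity $f^{-1}(B)=g^{-1}\big(h^{-1}(B)\big)=g^{-1}(\wf)=C$ is exactly how that citation is meant to be used. No issues.
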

As above, let $B$ be a co-analytic subset of a standard Borel space $X$.
A map $\varphi:B\to\omega_1$ is said to be a \textit{co-analytic rank on} $B$ if there
exist two binary relations $\leq_\Sigma$ and $\leq_\Pi$ on $X$, which are analytic and
co-analytic respectively, such that for every $y\in B$ we have
\[ \varphi(x)\leq \varphi(y) \Leftrightarrow (x\in B) \text{ and } \varphi(x)\leq\varphi(y)
\Leftrightarrow x\leq_\Sigma y \Leftrightarrow x\leq_\Pi y.\]
A basic property of co-analytic ranks is that they satisfy \textit{boundedness};
that is, if $A$ is an analytic subset of $B$, then $\sup\{\varphi(x):x\in A\}<\omega_1$.
For a proof as well as for a thorough presentation of Rank Theory we refer
to \cite[\S 34]{Kechris}.

We will also need the following.
\begin{fact} \label{2f6}
Let $X$ be a standard Borel space and $\ppp$ be an analytic subset of $X\times\tr$.
Then the set $\ppp^{\sharp}\subseteq X$ defined by
\[ x\in \ppp^{\sharp} \Leftrightarrow \forall S\in\tr \ [(x,S)\in\ppp\Rightarrow S\in\wf]\]
is co-analytic. Moreover, there exists a co-analytic rank $\varphi:\ppp^{\sharp}\to\omega_1$
such that for every $x\in\ppp^{\sharp}$ we have $\sup\{ o(S): S\in\tr \text{ and }
(x,S)\in \ppp\}\leq \varphi(x)$.
\end{fact}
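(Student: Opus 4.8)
The plan is to prove Fact 2.6 by reducing the assertion about $\ppp^\sharp$ and its rank to the classical theory of the co-analytic rank $o(\cdot)$ on $\wf$. The statement has two halves: first that $\ppp^\sharp$ is co-analytic, and second that there is a co-analytic rank $\varphi$ on $\ppp^\sharp$ dominating the supremum of tree-orders over the section of $\ppp$ above each point. I would treat the co-analyticity claim as essentially a formal computation and devote the real work to constructing the rank.

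\medskip

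For co-analyticity, I would write out the defining condition for $\ppp^\sharp$ directly. Since $\ppp$ is analytic and $\wf$ is co-analytic, the condition
\[ x\in\ppp^\sharp \Leftrightarrow \forall S\in\tr\ \big[(x,S)\in\ppp \Rightarrow S\in\wf\big] \]
is a universal quantification over the standard Borel space $\tr$ of a formula of the form $(\text{co-analytic}) \vee (\text{co-analytic})$, hence co-analytic; and a universal real quantifier applied to a co-analytic matrix keeps the result co-analytic (this is the standard closure property, \cite[\S 29]{Kechris}). First I would record this, noting that the complement of $\ppp^\sharp$ is the projection $\{x:\exists S\ (x,S)\in\ppp\text{ and }S\notin\wf\}$ of an analytic set, hence analytic.

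\medskip

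The substantive part is the rank. The natural candidate is
\[ \varphi(x)=\sup\{o(S): S\in\tr \text{ and } (x,S)\in\ppp\}, \]
and the content of the Fact is that this is a genuine co-analytic rank (so that, in particular, the supremum is a \emph{countable} ordinal for each $x\in\ppp^\sharp$, giving the ``$\leq\varphi(x)$'' bound trivially). My approach would be to exhibit the analytic and co-analytic relations $\leq_\Sigma,\leq_\Pi$ required by the definition in \S 2.3. The clean way to do this is to transport the problem onto a single tree. Since $\ppp$ is analytic, fix a Borel $f:\bs\to X\times\tr$ with range $\ppp$, and use $f$ together with the universal machinery for $\wf$ to build, for each $x$, a tree whose order equals $\varphi(x)$: concretely, one amalgamates the trees $S$ with $(x,S)\in\ppp$ over the witnessing branches of $f$ into one tree $R_x$ depending on $x$ in a Borel fashion, with $o(R_x)=\sup_{(x,S)\in\ppp}o(S)$. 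Then $x\mapsto R_x$ is a Borel map into $\tr$ pulling the canonical rank $o$ back to $\varphi$, and the analytic/co-analytic relations for $o$ on $\wf$ (from the completeness and rank theory of $\wf$ in \cite[\S 34]{Kechris}) pull back to the required $\leq_\Sigma,\leq_\Pi$ on $\ppp^\sharp$.

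\medskip

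\textbf{The main obstacle} I anticipate is the amalgamation step: constructing $R_x$ Borel-measurably in $x$ so that its order is \emph{exactly} the supremum over the $\ppp$-section, rather than merely an upper or lower bound. The difficulty is that the section $\{S:(x,S)\in\ppp\}$ is only analytic, not closed, so one cannot simply take a ``disjoint union of trees'' indexed by the section. The trick is to route everything through the uniformizing map $f$: one forms a tree $R_x$ on (a copy of) $\nn\times\nn$ whose nodes code \emph{finite approximations} to a pair $(\sigma,\text{node of }S)$ where $\sigma$ is an $f$-witness for $(x,S)\in\ppp$, and one must check that branches of $R_x$ correspond exactly to infinite branches of some $S$ in the section, so that $R_x$ is well-founded iff $x\in\ppp^\sharp$ and, when well-founded, $o(R_x)$ computes the supremum of the $o(S)$. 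Verifying that $o(R_x)$ equals the supremum (both $\geq$ and $\leq$) and that $x\mapsto R_x$ is Borel is where the care lies; the rest follows formally from the fact that $o$ is a co-analytic rank on $\wf$.
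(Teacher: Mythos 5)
Your first half (co-analyticity, via writing the complement of $\ppp^{\sharp}$ as the projection of an analytic set) is exactly the paper's argument. The gap is in the second half. Your plan rests on the claim that $\varphi(x)=\sup\{o(S):(x,S)\in\ppp\}$ is itself a co-analytic rank, to be certified by a Borel assignment $x\mapsto R_x$ with $R_x\in\wf\Leftrightarrow x\in\ppp^{\sharp}$ and $o(R_x)$ \emph{equal} to this supremum. That claim is false in general, so the exactness you single out as ``where the care lies'' is not a technical hurdle but an impossibility. Indeed, initial segments of a co-analytic rank are Borel (\cite[\S 34]{Kechris}). Now take $X=\bs$ and an analytic set $A\subseteq X\times\bs$ whose projection $\mathrm{proj}_X A$ is not Borel, and let $\ppp$ consist of all pairs $(x,\{\varnothing\})$, $x\in X$, together with all pairs $\big(x,\{\varnothing,(\beta(1))\}\big)$ with $(x,\beta)\in A$; this $\ppp$ is analytic. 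Every tree in every section is well-founded of order at most $2$, so $\ppp^{\sharp}=X$, while the supremum equals $2$ precisely on $\mathrm{proj}_X A$ and $1$ elsewhere. Its initial segment $\{x:\sup\{o(S):(x,S)\in\ppp\}\leq 1\}=X\setminus\mathrm{proj}_X A$ is not Borel, so the supremum is not a co-analytic rank; a fortiori no Borel $x\mapsto R_x$ as above can achieve $o(R_x)$ equal to it, since then the supremum would be the pullback of the rank $o$ under a Borel map and hence a co-analytic rank. This is also a misreading of the Fact: it deliberately asserts the existence of \emph{some} rank dominating the supremum, not that the supremum itself is a rank.

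The repair is to drop exactness, which is all the Fact demands, and your amalgamation then delivers it provided you run it through the right representation. Represent $\ppp$ as the projection of a closed set $C\subseteq X\times\tr\times\bs$, i.e.\ via a tree of finite conditions, and let the nodes of $R_x$ be the finite conditions coding compatible approximations to triples $(S,\sigma,\beta)$ with $(x,S,\sigma)\in C$ and $\beta\in[S]$. (This choice also matters for Borel-ness: with your Borel surjection $f:\bs\to\ppp$, the consistency condition ``some $\sigma$ extending $u$ witnesses an appropriate $(x,S)$'' is analytic in $x$, not Borel.) Then $R_x\in\wf$ iff $x\in\ppp^{\sharp}$, and for each $S$ in the section with witness $\sigma$ the map sending $t\in S$ to the length-$|t|$ condition along $(S,\sigma,t)$ is monotone from $S$ into $R_x$, so $o(R_x)\geq o(S)$ and hence $o(R_x)\geq\sup\{o(S):(x,S)\in\ppp\}$; setting $\varphi(x)=o(R_x)$ finishes the proof. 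This corrected statement --- domination rather than equality --- is precisely the parameterized Lusin boundedness theorem that the paper invokes as a black box (\cite[page 365]{Kechris}, \cite[Theorem 11]{AD}), after which the paper's proof is two lines; your construction, suitably weakened, amounts to reproving that theorem.
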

\begin{proof}
First notice that $\ppp^{\sharp}$ is co-analytic since
\[ x\notin\ppp^{\sharp}\Leftrightarrow \exists S\in\tr \text{ such that }
[(x,S)\in\ppp \text{ and } S\notin\wf].\]
The existence of the rank $\varphi$ follows from the parameterized version
of Lusin's Boundedness Theorem for $\wf$. Indeed, by
\cite[page 365]{Kechris} (see also \cite[Theorem 11]{AD}),
there exists a Borel map $f:X\to\tr$ such that
\begin{enumerate}
\item[(a)] $f(x)\notin \wf$ if $x\notin\ppp^{\sharp}$, while
\item[(b)] $f(x)\in\wf$ if $x\in\ppp^{\sharp}$ and $\sup\{o(S):S\in \tr
\text{ and } (x,S)\in\ppp\} \leq o\big(f(x)\big)$.
\end{enumerate}
We set $\varphi(x)=o\big(f(x)\big)$ for every $x\in\ppp^{\sharp}$.
It is easy to check that $\varphi$ is as desired.
\end{proof}

\subsection{Regular families}

Notice that every subset of $\nn$ is naturally identified with an element
of $2^\nn$. We recall the following notions.
\begin{defn} \label{2d7}
Let $\fff$ be a family of finite subsets of $\nn$.
\begin{enumerate}
\item[(1)] The family $\fff$ is said to be \emph{compact} if $\fff$ is
a compact subset of $2^\nn$.
\item[(2)] The family $\fff$ is said to be \emph{hereditary} if for every
$F\in\fff$ and every $G\subseteq F$ we have that $G\in\fff$.
\item[(3)] The family $\fff$ is said to be \emph{spreading} if for every
$F=\{n_1<...<n_k\}\in\fff$ and every $G=\{m_1<...<m_k\}$ with
$n_i\leq m_i$ for all $i\in\{1,...,k\}$ we have that $G\in\fff$.
\item[(4)] The family $\fff$ is said to be \emph{regular} if $\fff$
is compact, hereditary and spreading.
\end{enumerate}
\end{defn}
Regular families are basic combinatorial objects. They have been widely
used in Combinatorics and Functional Analysis (see \cite{AT} for a detailed
exposition). Notice that every regular family $\fff$ is a well-founded tree
on $\nn$, and so, its order $o(\fff)$ can be defined as in \S 2.1.

Let $L=\{l_1<l_2<...\}\in [\nn]$. For every non-empty finite subset $F$ of $\nn$
let $L(F)=\{l_i:i\in F\}$; also let $L(\varnothing)=\varnothing$.
For every regular family $\fff$ we set
\begin{equation} \label{2e3}
\fff[L]=\{F\in\fff: F\subseteq L\} \ \text{ and } \
\fff(L)=\{ L(F): F\in\fff\}.
\end{equation}
We will need the following.
\begin{fact} \label{2f8}
Let $L\in [\nn]$ and $\fff$ be a regular family of finite subsets of $\nn$.
Then the following are satisfied.
\begin{enumerate}
\item[(i)] We have $\fff(L)\subseteq \fff[L]\subseteq \fff$.
\item[(ii)] We have $o\big(\fff(L)\big)=o\big(\fff[L]\big)=o(\fff)$.
\end{enumerate}
\end{fact}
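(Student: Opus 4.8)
The plan is to first establish the inclusions in (i) and then deduce the equalities in (ii) by squeezing the three orders between one another using two monotone maps. For (i), the inclusion $\fff[L]\subseteq\fff$ is immediate from the definition of $\fff[L]$. For $\fff(L)\subseteq\fff[L]$, fix $F=\{n_1<\dots<n_k\}\in\fff$ and set $G=L(F)=\{l_{n_1}<\dots<l_{n_k}\}$. Clearly $G\subseteq L$, so it suffices to check that $G\in\fff$. Since $L=\{l_1<l_2<\dots\}$ is a strictly increasing sequence of natural numbers we have $l_i\geq i$ for every $i$, and in particular $n_i\leq l_{n_i}$ for each $i\in\{1,\dots,k\}$. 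As $\fff$ is spreading, it follows that $G\in\fff$, hence $G\in\fff[L]$. This proves $\fff(L)\subseteq\fff[L]\subseteq\fff$.

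For (ii), I would first record that $\fff(L)$ and $\fff[L]$ are themselves trees: using that $\fff$ is hereditary one checks directly that both sets are closed under initial segments, once a finite set is identified with its increasing enumeration. Being subsets of the well-founded tree $\fff$, they are well-founded, so their orders are defined. The inclusions from (i) say precisely that the identity is a monotone map $\fff(L)\to\fff[L]$ and a monotone map $\fff[L]\to\fff$; applying the monotone-map principle recalled in \S 2.1 yields
\[ o\big(\fff(L)\big)\leq o\big(\fff[L]\big)\leq o(\fff). \]

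It remains to prove the reverse inequality $o(\fff)\leq o\big(\fff(L)\big)$, which closes the loop and forces all three orders to coincide; this is the heart of the argument. I would consider the map $\Phi:\fff\to\fff(L)$ sending $F=\{n_1<\dots<n_k\}$ to $L(F)=\{l_{n_1}<\dots<l_{n_k}\}$, which is well defined by the proof of (i) and whose range is exactly $\fff(L)$. Identifying finite sets with their increasing enumerations, if $F\sqsubset F'$ in $\fff$ then $L(F)\sqsubset L(F')$, so $\Phi$ is monotone. Since $\fff(L)$ is well-founded, the monotone-map principle of \S 2.1 gives $o(\fff)\leq o\big(\fff(L)\big)$, as required. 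The only delicate point is the bookkeeping that makes $\Phi$ monotone for the end-extension order, namely that passing from $F$ to a prefix of its increasing enumeration corresponds, under the map $i\mapsto l_i$, to passing from $L(F)$ to the corresponding prefix; this is transparent once the identification of regular families with trees on $\nn$ is made explicit.
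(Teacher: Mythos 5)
Your proof is correct and follows essentially the same route as the paper: part (i) from the spreading property, and part (ii) by combining the inclusion-induced inequalities $o\big(\fff(L)\big)\leq o\big(\fff[L]\big)\leq o(\fff)$ with the monotone map $\fff\ni F\mapsto L(F)\in\fff(L)$, which gives $o(\fff)\leq o\big(\fff(L)\big)$. Your write-up is simply a more detailed version of the paper's argument, making explicit the well-foundedness and tree-structure checks that the paper leaves to the reader.
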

\begin{proof}
Part (i) follows readily from the relevant definitions. To see part (ii),
notice that the map $\fff\ni F\mapsto L(F)\in \fff(L)$ is monotone.
Therefore, $o(\fff)\leq o\big(\fff(L)\big)$ and the result follows.
\end{proof}

\subsection{Schreier families}

The Schreier families $\sss_\xi$ $(1\leq\xi<\omega_1)$ are important
examples of regular families. We recall the definition of the first
two families $\sss_1$ and $\sss_2$ which are more relevant
to the rest of the paper (for more details we refer to \cite{AA,AGR,AT}).
The first Schreier family is defined by
\begin{equation} \label{2e4}
\sss_1=\{ F\subseteq \nn: |F|\leq \min F\}
\end{equation}
while the second one is defined by
\begin{equation} \label{2e5}
\sss_2=\Big\{ \bigcup_{i=1}^n F_i: n\in\nn, \ n\leq \min F_1, \ F_1<...<F_n
\text{ and } F_i\in\sss_1 \ \forall i=1,...,n\Big\}.
\end{equation}
We will need the following facts.
\begin{fact} \label{2f9}
For every $1\leq\xi<\omega_1$ we have $o(\sss_\xi)=\omega^\xi$.
\end{fact}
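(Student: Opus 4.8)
The plan is to argue by transfinite induction on $\xi$, exploiting the recursive definition of the Schreier families from \cite{AA,AT}: writing $\mathfrak{S}(\fff,\gggg)$ for the \emph{$\fff$-sum} of $\gggg$, namely the family of all sets $\bigcup_{i=1}^{n} E_i$ with $E_1<\cdots<E_n$, $E_i\in\gggg\setminus\{\varnothing\}$ and $\{\min E_1,\dots,\min E_n\}\in\fff$, one has $\sss_{\xi+1}=\mathfrak{S}(\sss_1,\sss_\xi)$, while for a limit ordinal $\lambda$ with a fixed sequence $\lambda_n\uparrow\lambda$ one has $\sss_\lambda=\{F: F\in\sss_{\lambda_{\min F}}\}$. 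The whole argument rests on two lemmas. The first is a \emph{multiplicativity lemma}: for all regular $\fff,\gggg$,
\[ o\big(\mathfrak{S}(\fff,\gggg)\big)=o(\gggg)\cdot o(\fff), \]
where $\cdot$ denotes ordinal multiplication (inner family first). The second is the restriction-invariance already recorded in Fact \ref{2f8}, which drives the limit step. The base case $o(\sss_1)=\omega$ I would check by hand: a direct computation gives $\{F\neq\varnothing: F\in\sss_1^{(k)}\}=\{F:|F|+k\leq\min F\}$ for every finite $k$, so no nonempty set survives to stage $\omega$, giving $o(\sss_1)=\omega$.

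For the successor step, assume $o(\sss_\xi)=\omega^\xi$. Applying the multiplicativity lemma with $\fff=\sss_1$ and $\gggg=\sss_\xi$, and using $o(\sss_1)=\omega$, yields
\[ o(\sss_{\xi+1})=o(\sss_\xi)\cdot o(\sss_1)=\omega^\xi\cdot\omega=\omega^{\xi+1}. \]
It is essential here that the inner order be taken first: the product $\omega^\xi\cdot\omega$ equals $\omega^{\xi+1}$, whereas the reversed product $\omega\cdot\omega^\xi=\omega^{1+\xi}$ would be wrong for infinite $\xi$.

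For a limit ordinal $\lambda$ with $\lambda_n\uparrow\lambda$, put $L_n=\{m\in\nn:m\geq n\}$. Every $F\subseteq L_n$ lying in $\sss_{\lambda_n}$ satisfies $\min F\geq n$ and hence belongs to $\sss_\lambda$, so $\sss_{\lambda_n}[L_n]\subseteq\sss_\lambda$; by Fact \ref{2f8}(ii), $o\big(\sss_{\lambda_n}[L_n]\big)=o(\sss_{\lambda_n})=\omega^{\lambda_n}$, and monotonicity of $o$ under tree inclusion (the inclusion map is monotone) gives $o(\sss_\lambda)\geq\omega^{\lambda_n}$ for every $n$, whence $o(\sss_\lambda)\geq\sup_n\omega^{\lambda_n}=\omega^\lambda$. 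For the reverse inequality I would build a monotone map $\psi:\sss_\lambda\to R$ into a well-founded tree of order $\omega^\lambda$ obtained by grafting, above the node determined by the first element $n$, a copy of $\sss_{\lambda_n}$; a set $F\in\sss_\lambda$, which lies in $\sss_{\lambda_{\min F}}$, is sent into the copy indexed by $\min F$. The monotone-map principle of \S 2.1 then gives $o(\sss_\lambda)\leq o(R)=\omega^\lambda$.

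The main obstacle is the multiplicativity lemma, which carries all of the ordinal arithmetic. I would prove it by a transfinite induction establishing, for every $\alpha\leq o(\fff)$, the identity
\[ \big(\mathfrak{S}(\fff,\gggg)\big)^{(o(\gggg)\cdot\alpha)}=\mathfrak{S}\big(\fff^{(\alpha)},\gggg\big) \]
(modulo the harmless bookkeeping of the empty set), the content being that each successive block of exactly $o(\gggg)$ derivatives of the sum peels off precisely one derivative of the outer family $\fff$, with limit stages handled by intersection. The clean inequalities are most safely obtained through the monotone-map principle, comparing $\mathfrak{S}(\fff,\gggg)$ in both directions with the lexicographically ordered product tree of $\fff$ and $\gggg$, which has order $o(\gggg)\cdot o(\fff)$. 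Getting the derivative bookkeeping exactly right at the limit stages—so that no spurious $+1$ intrudes and the powers of $\omega$ come out clean—is the delicate point of the whole argument.
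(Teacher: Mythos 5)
The paper does not actually prove this Fact: it is dispatched with the remark that it ``is proved using transfinite induction'' and the details are left to the reader, so your sketch can only be measured against the standard argument, which is indeed the induction you describe (successor step via the $\sss_1$-sum and multiplicativity of orders, limit step via restriction-invariance and a grafted tree). In outline your proposal is correct, but two points need repair before it compiles into a proof. First, your multiplicativity lemma is false as literally stated under the paper's convention that $o(S)$ is the least $\xi$ with $S^{(\xi)}=\varnothing$: the family $\fff=\{\varnothing\}\cup\big\{\{n\}:n\in\nn\big\}$ is regular with $o(\fff)=2$, yet $\mathfrak{S}(\fff,\gggg)=\gggg$ for every regular $\gggg$, while the formula predicts $o(\gggg)\cdot 2$. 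The cure is to count the order until $S^{(\xi)}\subseteq\{\varnothing\}$; note that the same adjustment is needed for the Fact itself to be literally true (with the paper's stated definition one gets $o(\sss_1)=\omega+1$, as your own base-case computation shows), so the ``bookkeeping of the empty set'' must be fixed uniformly at the outset rather than waved off --- once it is, the lemma and your derivative identity become the standard multiplicativity of orders under convolution.

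Second, at limit stages the definition you adopt, $\sss_\lambda=\{F: F\in\sss_{\lambda_{\min F}}\}$, does not support your lower-bound step: from $F\in\sss_{\lambda_n}$ and $\min F\geq n$ one cannot conclude $F\in\sss_{\lambda_{\min F}}$, because the families $\sss_\alpha$ are not increasing in $\alpha$; that family also need not be hereditary. With the standard definition, $\sss_\lambda=\{F: \exists n\leq \min F \text{ such that } F\in\sss_{\lambda_n}\}$, the inclusion $\sss_{\lambda_n}[L_n]\subseteq\sss_\lambda$ is immediate and the lower bound goes through. The grafting upper bound then needs a small modification, since a set in $\sss_\lambda$ no longer selects a canonical index: graft above the node $(n)$ a copy of $\bigcup_{m\leq n}\sss_{\lambda_m}$ (a finite union of well-founded trees has order equal to the maximum of the orders, since a tree is closed under initial segments), and send $F$ to the node $F$ in the copy indexed by $\min F$; this map is well defined and monotone, which your original prescription was not.
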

\begin{fact} \label{2f10}
Let $d\in\nn$ and $N=\{n_1<n_2<...\}\in[\nn]$. Let $1\leq\xi<\omega_1$
and $F\in\sss_\xi$ non-empty. Then we have that
\[ \big\{n_{dk+i-1}:k\in F \text{ and } i\in\{1,...,d\}\big\}\in\sss_\xi.\]
\end{fact}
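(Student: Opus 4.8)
The plan is to separate the two operations bundled into the statement: the \emph{spreading} induced by $N$ (replacing an index $j$ by $n_j$) and the \emph{fattening} that replaces each $k\in F$ by the block of $d$ consecutive indices $dk,dk+1,\dots,dk+d-1$. The spreading can be removed for free. Writing
\[ J=\{dk+i-1: k\in F \text{ and } i\in\{1,\dots,d\}\}\subseteq\nn, \]
the set in the statement is precisely $N(J)=\{n_j:j\in J\}$, i.e. a member of $\sss_\xi(N)$ as soon as $J\in\sss_\xi$. Since Fact~\ref{2f8}(i) (applied with $L=N$ and $\fff=\sss_\xi$) gives $\sss_\xi(N)\subseteq\sss_\xi$, it suffices to prove the purely combinatorial statement that $J\in\sss_\xi$ for every non-empty $F\in\sss_\xi$; the parameter $N$ then plays no further role.

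I would establish this by transfinite induction on $\xi$, following the recursive definition of the Schreier families. For $\xi=1$ the blocks attached to distinct elements of $F$ are disjoint, so $J$ has $d|F|$ elements and $\min J=d\min F$; hence $|J|=d|F|\le d\min F=\min J$ and $J\in\sss_1$ by \eqref{2e4}. For a successor $\xi+1$, decompose $F=\bigcup_{i=1}^n F_i$ with $n\le\min F_1$, $F_1<\dots<F_n$ and each $F_i\in\sss_\xi$ non-empty, as in \eqref{2e5}. Fattening respects this decomposition: $J=\bigcup_{i=1}^n J_i$, where $J_i$ is the fattening of $F_i$, the blocks are order-preserving so that $J_1<\dots<J_n$, each $J_i\in\sss_\xi$ by the inductive hypothesis, and $n\le\min F_1\le d\min F_1=\min J_1$. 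Thus $J\in\sss_{\xi+1}$.

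The limit case is where the argument becomes delicate, and I expect it to be the main obstacle. Let $\lambda$ be a limit ordinal with fixed fundamental sequence $(\lambda_n)$ and put $m=\min F$, so that $F\in\sss_\lambda$ means $F\in\sss_{\lambda_m}$. Fattening multiplies the minimum by $d$, so $\min J=dm$ and the goal $J\in\sss_\lambda$ unwinds to $J\in\sss_{\lambda_{dm}}$. The inductive hypothesis applied at $\lambda_m<\lambda$ yields only $J\in\sss_{\lambda_m}$, and I must promote this to membership in the (generally larger) family $\sss_{\lambda_{dm}}$.

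Here one cannot appeal to naive monotonicity: the Schreier families are \emph{not} increasing in the index, and in fact $\sss_{\lambda_m}\not\subseteq\sss_\lambda$ in general, because at the limit a set with small minimum is constrained very severely. What does hold, and is exactly what is needed, is monotonicity \emph{along a single fundamental sequence}: for $m\le m'$ one has $\sss_{\lambda_m}\subseteq\sss_{\lambda_{m'}}$. Granting this with $m'=dm\ge m$ gives $F\in\sss_{\lambda_{dm}}$, whence $J\in\sss_{\lambda_{dm}}$ by the inductive hypothesis applied now at $\lambda_{dm}$; and since $\min J=dm$ this is precisely $J\in\sss_\lambda$, closing the induction. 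The one genuinely technical point is therefore the inclusion $\sss_{\lambda_m}\subseteq\sss_{\lambda_{dm}}$, which I would prove by a secondary transfinite induction, reducing it to the threshold-free inclusion $\sss_\alpha\subseteq\sss_{\alpha+1}$ (take a single block in \eqref{2e5}) together with the coherence of the chosen fundamental sequences, namely that the family seen at each fixed minimum is non-decreasing in the term index. Everything else, including the reduction via Fact~\ref{2f8}, is routine.
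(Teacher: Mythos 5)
Your reduction via Fact~\ref{2f8}(i), your base case, and your successor case are all correct, and transfinite induction on $\xi$ is indeed the route the paper indicates (it gives no details, leaving the proof to the reader). The genuine problem is the limit case. The inclusion you lean on --- $\sss_{\lambda_m}\subseteq\sss_{\lambda_{m'}}$ for $m\le m'$ along a fundamental sequence --- is not a theorem about Schreier families: it depends entirely on how the fundamental sequences are chosen, and it fails for legitimate choices. Concretely, $\sss_3\not\subseteq\sss_\omega$: the set $F=\{2,\dots,7\}\cup\{8,\dots,15\}\cup\{16,\dots,31\}$ lies in $\sss_3$ (split it as $\{2,\dots,7\}\cup\{8,\dots,31\}$; each piece is a union of two $\sss_1$-sets, admissibly many in the sense of \eqref{2e5}), but it is not in $\sss_2$, since two $\sss_1$-blocks the first of which contains $2$ can cover at most $2+4=6$ of its $30$ elements; hence $F\notin\sss_\omega$, because any witness $n\le\min F=2$ would force $F\in\sss_1$ or $F\in\sss_2$. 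So a fundamental sequence for $\omega^2$ beginning $3,\omega,\omega\cdot 2,\dots$ breaks your monotonicity. Moreover, your proposed proof of the inclusion is circular: the ``coherence'' you invoke --- that the family seen at each fixed minimum is non-decreasing in the term index --- is precisely the statement to be proved. Coherent fundamental sequences \emph{can} be chosen, but that is a design decision imposed on the hierarchy, and it is forced on you only because you adopted the convention $F\in\sss_\lambda\Leftrightarrow F\in\sss_{\lambda_{\min F}}$, under which coherence is needed even for $\sss_\lambda$ to be spreading.

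The clean fix is to use the definition of the limit levels from the paper's references \cite{AA,AT}: for limit $\lambda$ with fundamental sequence $(\lambda_n)$, one declares $F\in\sss_\lambda$ if and only if $F\in\sss_{\lambda_n}$ for \emph{some} $n\le\min F$. With this definition the limit case needs no monotonicity whatsoever: if $n\le\min F$ witnesses $F\in\sss_{\lambda_n}$, the inductive hypothesis applied at $\lambda_n<\lambda$ gives $J\in\sss_{\lambda_n}$ for the fattened set $J$, and the \emph{same} $n$ witnesses $J\in\sss_\lambda$, because $n\le\min F\le d\min F=\min J$. The inequality $\min J\ge\min F$, which you already exploit in the base and successor cases, does all the work. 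With this change your induction closes, and the rest of your write-up (the reduction to $J\in\sss_\xi$ via Fact~\ref{2f8}(i) and the two other cases) stands as is.
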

Fact \ref{2f9} and Fact \ref{2f10} are both proved using transfinite induction.
We leave the details to the interested reader.


\section{A criterion for checking that $\varrho(T)\leq\xi$}

Let $X$ and $Y$ be two Banach spaces with Schauder bases, $T\in\llll(X,Y)$
and $1\leq \xi<\omega_1$. The main result of this section is a simple criterion for checking
that $\varrho(T)\leq \xi$. To state it, we need to introduce the following definition.
\begin{defn} \label{3d11}
Let $X$ and $Y$ be two Banach spaces with normalized bases $(e_n)$ and
$(z_n)$ respectively and $T\in\llll(X,Y)$. We say that two sequences
$(x_n)$ and $(y_n)$, in $X$ and $Y$ respectively, are \emph{$T$-compatible
with respect to $(e_n)$ and $(z_n)$} if the following are satisfied.
\begin{enumerate}
\item[(1)] The sequence $(x_n)$ is a normalized block sequence of $(e_n)$.
\item[(2)] The sequence $(y_n)$ is a seminormalized block sequence of $(z_n)$.
\item[(3)] We have $\|T(x_n)-y_n\|\leq 2^{-n}$ for every $n\in\nn$.
\end{enumerate}
If the bases $(e_n)$ and $(z_n)$ are understood, then we simply say that $(x_n)$
and $(y_n)$ are $T$-compatible.
\end{defn}
We can now state the main result of this section.
\begin{lem} \label{3l12}
Let $X$ and $Y$ be two Banach spaces with normalized bases $(e_n)$ and
$(z_n)$ respectively, $T\in\llll(X,Y)$ and $1\leq \xi<\omega_1$. Then the
following are equivalent.
\begin{enumerate}
\item[(i)] We have $\varrho(T)\leq\xi$.
\item[(ii)] For every pair $(x_n)$ and $(y_n)$ of $T$-compatible sequences
with respect to $(e_n)$ and $(z_n)$ and every $\delta>0$ there exist a non-empty
set $F\in\sss_\xi$ and reals $(a_n)_{n\in F}$ such that
\[ \big\| \sum_{n\in F} a_n x_n \big\|=1  \ \text{ and } \
\big\| \sum_{n\in F} a_n y_n \big\|\leq \delta. \]
\end{enumerate}
\end{lem}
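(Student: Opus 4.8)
The plan is to unfold condition (i) through the definition of $\varrho$: since the Schreier families increase with their index, $\varrho(T)\le\xi$ is equivalent to the assertion that $T$ is $\sss_\xi$-strictly singular, and I will establish the equivalence of this latter property with (ii). Throughout, let $K$ be a common basis constant for $(e_n)$ and $(z_n)$. The two elementary facts I will invoke repeatedly are, first, that every normalized block sequence of a basis with constant $K$ (and every subsequence of such) again has basis constant at most $K$, so its coefficients satisfy $|a_n|\le 2K$ whenever $\|\sum a_n x_n\|=1$; and second, that the spreading property of $\sss_\xi$ allows any $F\in\sss_\xi$ to be pushed forward along a strictly increasing reindexing $n\mapsto m_n$ with $m_n\ge n$ while remaining in $\sss_\xi$.

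For the direction (i)$\Rightarrow$(ii), I would take a $T$-compatible pair $(x_n),(y_n)$ and $\delta>0$, and apply $\sss_\xi$-strict singularity not to $(x_n)$ itself but to a far tail $(x_n)_{n\ge N}$ (relabeled), obtaining $F'\in\sss_\xi$ and a norm-one vector $x=\sum_{k\in F'}b_k x_{N+k-1}$ with $\|T(x)\|\le\delta/2$. Transporting $F'$ back by spreading to $F=\{N+k-1:k\in F'\}\in\sss_\xi$ and estimating $\|\sum_{n\in F}a_n y_n\|\le\|T(x)\|+\sum_{n\in F}|a_n|\,\|y_n-T(x_n)\|\le \delta/2+2K\cdot 2^{-N+1}$, I can make the right-hand side at most $\delta$ by taking $N$ large; since $\|\sum_{n\in F}a_n x_n\|=\|x\|=1$, this is precisely (ii). The role of the tail is to force $\min F\ge N$, rendering the compatibility error $\sum 2^{-n}$ over $F$ negligible.

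The substantial direction is (ii)$\Rightarrow$(i). Given a normalized basic sequence $(w_n)$ in $X$ and $\ee>0$, I would first run a gliding-hump argument in $X$ to extract a subsequence $(w_{n_j})$ and a normalized block sequence $(x_j)$ of $(e_n)$ with $\|x_j-w_{n_j}\|\le 2^{-j}$. If $\liminf_j\|T(x_j)\|=0$ I am done immediately: for some $j$ both $\|T(x_j)\|$ and hence $\|T(w_{n_j})\|$ are at most $\ee$, and $w_{n_j}$ is a norm-one vector supported on the singleton $\{n_j\}\in\sss_\xi$. Otherwise $\|T(x_j)\|\ge c>0$ along a tail, and a second gliding hump in $Y$ produces a subsequence $\tilde x_l=x_{j_l}$ together with a block sequence $(y_l)$ of $(z_n)$ satisfying $\|T(\tilde x_l)-y_l\|\le 2^{-l}$; since then $\|y_l\|\ge c/2$, the pair $(\tilde x_l),(y_l)$ is $T$-compatible.

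I would now apply hypothesis (ii) to a far tail of this compatible pair with a small $\delta$, obtaining $F\in\sss_\xi$ with $\min F\ge N$ and coefficients $(a_l)$ such that $\|\sum a_l\tilde x_l\|=1$ and $\|\sum a_l y_l\|\le\delta$. Setting $w=\sum_{l\in F}a_l w_{m_l}$ with $m_l=n_{j_l}$, the set $M=\{m_l:l\in F\}$ lies in $\sss_\xi$ by spreading (as $m_l\ge l$), and all three error sources—the $X$-perturbation $\|\tilde x_l-w_{m_l}\|\le 2^{-l}$, the compatibility defect $\|T(\tilde x_l)-y_l\|\le 2^{-l}$, and the gap $\|\sum a_l y_l\|\le\delta$—are controlled by $2K\cdot 2^{-N+1}$ and $\delta$, using $|a_l|\le 2K$ and $\min F\ge N$. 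Hence $\|w\|\ge 1/2$ and $\|T(w)\|\le\delta+O(2^{-N})$, so $w/\|w\|\in\sspan\{w_n:n\in M\}$ is a norm-one witness with $\|T(w/\|w\|)\|\le\ee$ once $N$ is large and $\delta$ small, establishing $\sss_\xi$-strict singularity. The main obstacle is exactly this error bookkeeping: the compatibility errors $2^{-n}$ only become harmless after $F$ has been relocated into a far tail via spreading, which is why both the tail device and the spreading property of $\sss_\xi$ are indispensable.
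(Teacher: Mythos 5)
Your proposal for (i)$\Rightarrow$(ii) is fine and matches what the paper treats as the easy direction. The gap is in the very first step of (ii)$\Rightarrow$(i): you claim that from an arbitrary normalized basic sequence $(w_n)$ in $X$ one can ``run a gliding-hump argument'' to extract a subsequence $(w_{n_j})$ and a normalized block sequence $(x_j)$ of $(e_n)$ with $\|x_j-w_{n_j}\|\leq 2^{-j}$. This is false in general: the gliding hump requires the coordinates $e^*_k(w_n)$ to tend to $0$ (equivalently, some form of weak nullity), and a normalized basic sequence need not have any such subsequence. A concrete counterexample is $X=c_0$ with its standard basis $(e_n)$ and $(w_n)$ the summing basis $w_n=e_1+\cdots+e_n$: this is a normalized basic sequence, yet $e_1^*(w_n)=1$ for all $n$, so no subsequence is a small perturbation of any block sequence of $(e_n)$. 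Since hypothesis (ii) only speaks about block sequences (via $T$-compatible pairs), your argument never gets to apply it to such a sequence, and $\sss_\xi$-strict singularity is precisely a statement about \emph{all} normalized basic sequences.

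The paper's proof contains exactly the device your proposal is missing. Using Fact \ref{3f13} one may pass to a subsequence of $(v_n)$ along which all coordinate functionals $e^*_k(v_n)$ and $z^*_k(T(v_n))$ converge (not necessarily to $0$), and then one replaces $(v_n)$ by the \emph{difference sequence} $d_n=v_{2n}-v_{2n-1}$. The differences are seminormalized (by basicity) and coordinatewise null in both $X$ and, after applying $T$, in $Y$; only at this point does the gliding hump become legitimate, and the rest of your outline (the case distinction on $\liminf\|T(d_n)\|$, the second gliding hump in $Y$, applying (ii) to the resulting $T$-compatible pair, and the error bookkeeping via spreading) goes through essentially as you wrote it. The price of the difference trick is that the witnessing vector lives on \emph{pairs} of original indices, so membership of the final set $G$ in $\sss_\xi$ is not a consequence of spreading alone; this is why the paper invokes Fact \ref{2f10} (stability of $\sss_\xi$ under bounded ``doubling'' of indices along a subsequence). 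Your bookkeeping, which only relocates singletons $m_l\geq l$, does not cover this and would also need to be adjusted accordingly.
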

For the proof of Lemma \ref{3l12} we will need the following simple fact.
It was also observed in \cite{ADST}.
\begin{fact} \label{3f13}
Let $X$ and $Y$ be separable Banach spaces, $T\in\llll(X,Y)$ and $1\leq \xi<\omega_1$.
Also let $(x_n)$ be a normalized basic sequence in $X$ and $\ee>0$. Then there exist
a non-empty set $F\in\sss_\xi$ and a norm-one vector $x\in\sspan\{x_n:n\in F\}$
such that $\|T(x)\|\leq\ee$ if and only if there exist a subsequence $(x_{n_k})$
of $(x_n)$, a non-empty set $H\in\sss_\xi$ and a norm-one vector
$x'\in\sspan\{x_{n_k}:k\in H\}$ such that $\|T(x')\|\leq\ee$.
\end{fact}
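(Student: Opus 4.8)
The plan is to treat the two implications separately, noting that the forward direction is immediate while the backward direction carries all of the content and reduces to a single application of the spreading property of $\sss_\xi$ together with the elementary inequality $n_k\geq k$ valid for any strictly increasing sequence $(n_k)$ of natural numbers.

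For the direction from left to right, suppose there are a non-empty $F\in\sss_\xi$ and a norm-one vector $x\in\sspan\{x_n:n\in F\}$ with $\|T(x)\|\leq\ee$. I would simply take the ``subsequence'' to be the sequence $(x_n)$ itself, i.e.\ set $n_k=k$ for every $k$, and then put $H=F$ and $x'=x$. In this case $\sspan\{x_{n_k}:k\in H\}=\sspan\{x_n:n\in F\}$, so the triple $\big((x_{n_k}),H,x'\big)$ witnesses the right-hand side.

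For the direction from right to left, suppose we are given a subsequence $(x_{n_k})$ of $(x_n)$, a non-empty $H\in\sss_\xi$ and a norm-one vector $x'\in\sspan\{x_{n_k}:k\in H\}$ with $\|T(x')\|\leq\ee$. The natural candidate is the set
\[ F=\{n_k:k\in H\}. \]
Since $\sspan\{x_{n_k}:k\in H\}=\sspan\{x_n:n\in F\}$, taking $x=x'$ already produces a norm-one vector of $\sspan\{x_n:n\in F\}$ with $\|T(x)\|\leq\ee$; the only point that needs checking is that $F\in\sss_\xi$. Writing $H=\{h_1<\dots<h_j\}$ and using that $k\mapsto n_k$ is strictly increasing, we have $F=\{n_{h_1}<\dots<n_{h_j}\}$, which is non-empty because $H$ is. As $(n_k)$ is a strictly increasing sequence of natural numbers, $n_k\geq k$ for all $k$, whence $h_i\leq n_{h_i}$ for every $i\in\{1,\dots,j\}$. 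The spreading property of the regular family $\sss_\xi$ (Definition \ref{2d7}(3)) then gives $F\in\sss_\xi$, as required.

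I do not expect any genuine obstacle here: the whole argument is the observation that $F=\{n_k:k\in H\}$ lies in $\sss_\xi$, which is forced by spreading once one records the inequality $n_k\geq k$. The only thing to be careful about is the bookkeeping that the increasing enumeration of $H$ dominates coordinatewise below the enumeration of $F$, so that the hypothesis of the spreading condition is met exactly.
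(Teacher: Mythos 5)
Your proof is correct and is essentially the intended argument: the paper states Fact \ref{3f13} without proof (attributing the observation to \cite{ADST}), and the standard verification is exactly yours. The forward direction is witnessed by the trivial subsequence $n_k=k$, while the backward direction reduces to checking $F=\{n_k:k\in H\}\in\sss_\xi$, which follows from $n_{h_i}\geq h_i$ together with the spreading property of the regular family $\sss_\xi$, precisely as you record.
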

We proceed to the proof of Lemma \ref{3l12}.
\begin{proof}[Proof of Lemma \ref{3l12}]
It is clear that (i) implies (ii). We work to prove the converse implication.
The arguments are fairly standard, and so, we will be rather sketchy.

Let $(e^*_n)$ and $(z^*_n)$ be the bi-orthogonal functionals associated to
$(e_n)$ and $(z_n)$ respectively. Let $(v_n)$ be a normalized basic sequence
in $X$ and $\ee>0$. We need to find a non-empty set $G\in\sss_\xi$ and a norm-one
vector $v\in\sspan\{v_n: n\in G\}$ such that $\|T(v)\|\leq\ee$. To this end,
by Fact \ref{3f13}, we are allowed to pass to subsequences of $(v_n)$.
Therefore, we may assume that for every $k\in\nn$ the sequences
$\big(e^*_k(v_n)\big)$ and $\big(z^*_k(T(v_n))\big)$ are both convergent.
Let $(d_n)$ be the difference sequence $(v_n)$; that is, $d_n=v_{2n}-v_{2n-1}$
for every $n\in\nn$. Notice that
\begin{enumerate}
\item[(a)] the sequence $(d_n)$ is seminormalized,
\item[(b)] $e^*_k(d_n)\to 0$ for every $k\in\nn$ and
\item[(c)] $z_k^*\big(T(d_n)\big)\to 0$ for every $k\in\nn$.
\end{enumerate}
By (a) and (b) and by passing to a subsequence of $(v_n)$, it is possible
to find a seminormalized block sequence $(b^0_n)$ of $(e_n)$ such that
$\|b^0_n-d_n\|\leq 2^{-n}$ for every $n\in\nn$. Now we distinguish
the following (mutually exclusive) cases.
\medskip

\noindent \textsc{Case 1:} \textit{There exists a subsequence of $\big(T(d_n)\big)$
which is norm convergent to $0$.} In this case it is easy to see that there exist
$G\in\sss_\xi$ with $|G|=2$ and a norm-one vector $v\in\sspan\{v_n:n\in G\}$
such that $\|T(v)\|\leq\ee$.
\medskip

\noindent \textsc{Case 2:} \textit{There exists a subsequence of $\big(T(d_n)\big)$
which is seminormalized.} In this case, by (c) above and by passing
to a further subsequence of $(v_n)$, we may find a seminormalized block sequence
$(b^1_n)$ of $(z_n)$ such that $\|b^1_n-T(d_n)\|\leq 2^{-n}$ for every $n\in\nn$.
Summing up, we see that it is possible to select an infinite subset
$N=\{n_1<n_2<...\}$ of $\nn$ such that, setting
\[ w_k=v_{n_{2k+1}}-v_{n_{2k}}\]
for every $k\in\nn$, the sequences $(w_k)$ and $\big(T(w_k)\big)$
are both seminormalized and ``almost block". Hence, using our hypotheses,
we may find a non-empty set $F\in\sss_\xi$ and a norm-one vector $v\in\sspan\{w_k:k\in F\}$
such that $\|T(v)\|\leq\ee$. By Fact \ref{2f10}, we see that
$G:=\big\{n_{2k+i-1}:k\in F \text{ and } i\in\{1,2\}\big\}\in\sss_\xi$
and the result follows.
\end{proof}


\section{Proof of Theorem \ref{1t2}}

Let $X$ and $Y$ be separable Banach spaces. Let $\bbb$ be the subset of
$X^\nn$ defined by
\[ (x_n)\in\bbb\Leftrightarrow (x_n) \text{ is a normalized basic sequence}.\]
It is easy to see that $\bbb$ is an $F_\sg$ subset of $X^\nn$. Hence, the set $\bbb$
equipped with the relative Borel $\sg$-algebra of $X^\nn$ is a standard Borel space
(see \cite{Kechris}).

For every $T\in\llll(X,Y)$, every $(x_n)\in\bbb$ and every $m\in\nn$ we introduce
a tree $S(T,(x_n),m)$ on $\nn$ defined by the rule
\begin{eqnarray} \label{4e6}
s\in S(T,(x_n),m) & \Leftrightarrow & \text{either } s=\varnothing \text{ or }
s=(n_1<...<n_k)\in\ibt \text{ and} \\
\nonumber & & \forall d\in\nn \text{ with } d\leq k, \ \forall (l_1<...<l_d)\in\ibt \text{ with} \\
\nonumber & & n_i\leq l_i \text{ for every } i\in\{1,...,d\} \text{ and }
\forall a_1,...,a_d\in\qq \\
\nonumber & & \text{we have } \big\| T\big( \sum_{i=1}^d a_i x_{l_i}\big) \big\|
\geq \frac{1}{m} \big\| \sum_{i=1}^d a_i x_{l_i} \big\|.
\end{eqnarray}
We notice the following simple facts. The proofs are left to the reader.
\begin{fact} \label{4f14}
The map $\llll(X,Y)\times\bbb\times\nn\ni (T,(x_n),m)\mapsto S(T,(x_n),m)\in\tr$
is Borel.
\end{fact}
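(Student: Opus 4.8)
The plan is to show measurability by unpacking the membership condition \eqref{4e6} defining the tree $S(T,(x_n),m)$ and checking that, for each fixed finite sequence $s\in\bt$, the set of triples $(T,(x_n),m)$ for which $s\in S(T,(x_n),m)$ is Borel. Since a map into the Polish space $\tr\subseteq 2^{\bt}$ is Borel precisely when the preimage of each subbasic clopen set $\{S\in\tr: s\in S\}$ is Borel, this suffices. Accordingly, first I would fix $s\in\bt$. If $s=\varnothing$ or $s\notin\ibt$, the condition is trivial (either always satisfied or irrelevant, as the defining rule only quantifies over $s\in\ibt$), so I would restrict attention to $s=(n_1<\dots<n_k)\in\ibt$.

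For such an $s$, membership $s\in S(T,(x_n),m)$ is, by \eqref{4e6}, the conjunction over all $d\leq k$, all $(l_1<\dots<l_d)\in\ibt$ with $n_i\leq l_i$, and all rationals $a_1,\dots,a_d\in\qq$ of the inequality
\[ \big\| T\big( \textstyle\sum_{i=1}^d a_i x_{l_i}\big)\big\|\geq \tfrac{1}{m}\big\|\textstyle\sum_{i=1}^d a_i x_{l_i}\big\|. \]
The key observation is that this is a \emph{countable} intersection: there are finitely many choices of $d$, countably many admissible index tuples $(l_1<\dots<l_d)$, and countably many rational coefficient tuples. Thus it is enough to verify that for each fixed $d$, fixed $(l_1,\dots,l_d)$ and fixed $(a_1,\dots,a_d)\in\qq^d$, the set of triples satisfying the single displayed inequality is Borel.

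This last point is where I would spend the care, though it is routine. For fixed indices and coefficients, the map $(x_n)\mapsto \sum_{i=1}^d a_i x_{l_i}$ is continuous from $\bbb\subseteq X^\nn$ into $X$ (it is a finite linear combination of coordinate evaluations, and evaluation is continuous), so $(x_n)\mapsto \|\sum_i a_i x_{l_i}\|$ is continuous. For the left-hand side, recall that $\llll(X,Y)$ carries the strong operator topology, under which for each fixed $u\in X$ the evaluation $T\mapsto T(u)$ is continuous; composing with the norm, $T\mapsto \|T(u)\|$ is continuous. Here the vector $u=\sum_i a_i x_{l_i}$ itself depends continuously on $(x_n)$, so the joint map $(T,(x_n))\mapsto \|T(\sum_i a_i x_{l_i})\|$ is continuous, being a composition of the continuous map $(T,(x_n))\mapsto (T,\sum_i a_i x_{l_i})$ with the jointly continuous evaluation-and-norm map $\llll(X,Y)\times X\to\rr$. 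The variable $m$ enters only through the factor $1/m$, and since $\nn$ is discrete we may fix $m$ as well. Hence the single inequality defines a closed (in particular Borel) subset of $\llll(X,Y)\times\bbb\times\nn$, the countable intersection over $d$, $(l_i)$ and $(a_i)$ is Borel, and intersecting over the (countably many) sequences $s$ yields the Borel measurability of the whole map. I do not expect a genuine obstacle here; the only mild subtlety worth flagging is the interaction of the strong operator topology with the varying argument $\sum_i a_i x_{l_i}$, which is resolved by the joint continuity of evaluation noted above rather than by separate continuity in each variable.
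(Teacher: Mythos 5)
Your overall strategy is the natural one (and the paper itself leaves this verification to the reader, so there is no official proof to diverge from): reduce Borelness of the map to Borelness, for each fixed $s\in\bt$, of the set of triples with $s\in S(T,(x_n),m)$, and then observe that for $s=(n_1<\dots<n_k)\in\ibt$ this set is a countable intersection, over $d\leq k$, over admissible tuples $(l_1<\dots<l_d)$ and over rational coefficient tuples, of sets defined by a single norm inequality. All of that is correct. The flaw sits exactly at the point you flag as the ``only mild subtlety'': the evaluation map $\llll(X,Y)\times X\to Y$, $(T,u)\mapsto T(u)$, is \emph{not} jointly continuous when $\llll(X,Y)$ carries the strong operator topology. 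A basic SOT-neighborhood of an operator only constrains its values on finitely many vectors $v_1,\dots,v_r$; in an infinite-dimensional $X$ one can therefore find $T$ in any such neighborhood of $0$ and $u$ of arbitrarily small norm with $\|T(u)\|$ as large as one likes (take $\phi\in X^*$ vanishing on $\mathrm{span}\{v_1,\dots,v_r\}$ with $\phi(u)\neq 0$ and $T=\phi(\cdot)\,y_0$ suitably scaled). So joint continuity fails, and your conclusion that the single-inequality set is closed does not follow. Sequential joint continuity does hold (by Banach--Steinhaus, SOT-convergent sequences are uniformly bounded), but SOT on all of $\llll(X,Y)$ is neither metrizable nor sequential, so sequential continuity does not give continuity, nor does sequential closedness give closedness.

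The gap is genuine but routine to repair, and the repair preserves your architecture. For $k\in\nn$ let $B_k=\{T\in\llll(X,Y):\|T\|\leq k\}$; each $B_k$ is SOT-closed (the operator norm is SOT-lower semicontinuous), and on $B_k\times X$ evaluation \emph{is} jointly continuous, since $\|T'(u')-T(u)\|\leq k\|u'-u\|+\|(T'-T)(u)\|$. Hence the set defined by a fixed inequality, intersected with $B_k\times\bbb\times\nn$, is closed in the ambient space, and the full single-inequality set is the union over $k$ of these sets, i.e.\ it is $F_\sigma$ rather than closed. This is still Borel, and the remainder of your argument --- the countable intersections over $d$, $(l_i)$, $(a_i)$, and then over $s$ --- goes through verbatim.
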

\begin{fact} \label{4f15}
Let $T\in\llll(X,Y)$. If $T$ is not strictly singular, then there exist $(x_n)\in\bbb$
and $m\in\nn$ such that the tree $S(T,(x_n),m)$ is not well-founded.
\end{fact}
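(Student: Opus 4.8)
The plan is to read off the two witnesses $(x_n)\in\bbb$ and $m\in\nn$ directly from the failure of strict singularity, and then to check that the uniform lower estimate they provide makes the defining condition of the tree hold at \emph{every} node, so that $S(T,(x_n),m)$ collapses to the full tree $\ibt$.

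First I would unpack the hypothesis. Saying that $T$ is not strictly singular means, by definition, that $T$ restricts to an isomorphic embedding on some infinite-dimensional (closed) subspace $Z$ of $X$; equivalently, there is a constant $c>0$ with $\|T(z)\|\geq c\|z\|$ for every $z\in Z$. By Mazur's theorem the subspace $Z$ contains a normalized basic sequence $(x_n)$, and this sequence is exactly a point of $\bbb$. I would then fix any $m\in\nn$ with $1/m\leq c$; note that $\overline{\sspan}\{x_n:n\in\nn\}\subseteq Z$, since $Z$ is closed.

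The decisive observation is that the estimate $\|T(z)\|\geq c\|z\|$ is uniform over $Z$ and therefore completely insensitive to the spreading quantifier appearing in \eqref{4e6}. Concretely, for any $d$, any strictly increasing $(l_1<\dots<l_d)$, and any scalars $a_1,\dots,a_d$ (in particular rational ones), the vector $\sum_{i=1}^d a_i x_{l_i}$ lies in $Z$, so
\[ \Big\| T\Big( \sum_{i=1}^d a_i x_{l_i} \Big) \Big\| \ \geq\ c \Big\| \sum_{i=1}^d a_i x_{l_i} \Big\| \ \geq\ \frac{1}{m} \Big\| \sum_{i=1}^d a_i x_{l_i} \Big\|. \]
Since this holds with no restriction whatsoever on the indices, a fortiori it holds under the constraint $n_i\leq l_i$ imposed in the definition of the tree. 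Hence every $s\in\ibt$ meets the right-hand condition of \eqref{4e6}, which gives $S(T,(x_n),m)=\ibt$. As $\ibt$ carries the infinite branch $(1<2<3<\cdots)$, its body is non-empty and the tree is not well-founded, completing the argument.

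I do not anticipate a genuine obstacle: the proof rests only on extracting a basic sequence from the distinguishing subspace (Mazur) and on the fact that a uniform lower bound trivializes the quantifiers defining $S(T,(x_n),m)$. The single point I would state with care is that the closed linear span of $(x_n)$ remains inside $Z$, so that the estimate $\|T(z)\|\geq c\|z\|$ legitimately applies to each finite combination occurring in \eqref{4e6}.
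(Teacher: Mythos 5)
Your proof is correct: extracting a normalized basic sequence $(x_n)$ (via Mazur) from a subspace on which $T$ is an isomorphic embedding with lower bound $c$, and choosing $m$ with $1/m\leq c$, indeed forces $S(T,(x_n),m)=\ibt$, which has an infinite branch. The paper leaves this fact to the reader precisely because this is the intended argument, so your proposal matches the paper's (omitted) proof.
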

We proceed to analyze the above defined trees when the operator $T$ is strictly
singular.
\begin{claim} \label{4c16}
Let $T\in\sss\sss(X,Y)$ with $\varrho(T)=\xi$. Also let $(x_n)\in\bbb$ and $m\in\nn$.
Then the tree $S(T,(x_n),m)$ is a regular family. Moreover,
\begin{equation} \label{4e7}
o\big(S(T,(x_n),m)\big)\leq \omega^{\xi+1}.
\end{equation}
\end{claim}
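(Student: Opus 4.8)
Set $S=S(T,(x_n),m)$ for brevity and, following \S 2.1, identify each strictly increasing sequence in $S$ with the corresponding finite subset of $\nn$. The plan is to read off the hereditary and spreading properties almost directly from the definition \eqref{4e6}, to derive compactness (equivalently well-foundedness) from the strict singularity of $T$, and to extract the ordinal bound from a comparison between $S$ and the Schreier family $\sss_{\xi+1}$.

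First I would verify that $S$ is hereditary and spreading, both of which are built into \eqref{4e6} through the quantification over all $d\le k$ and over all spreadings $(l_1<\dots<l_d)$ with $n_i\le l_i$. Indeed, if $F=\{n_1<\dots<n_k\}\in S$ and $G=\{n_{i_1}<\dots<n_{i_j}\}\subseteq F$, then for any $d\le j$ and any admissible $(l_1<\dots<l_d)$ for $G$ one has $l_p\ge n_{i_p}\ge n_p$, so this tuple is also an admissible spreading of the first $d$ entries of $F$; the lower bound guaranteed by $F\in S$ then yields the lower bound required for $G$, whence $G\in S$. The identical observation, applied with $G$ a term-by-term spreading of $F$ of the same length, shows $S$ is spreading. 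In particular $S$ is closed under initial segments, i.e.\ a tree.

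Next I would show $S$ is well-founded; since $S$ is hereditary this is the same as compactness. Suppose there were an infinite $A=\{a_1<a_2<\dots\}$ with every initial segment in $S$. For each $k$, applying \eqref{4e6} to $\{a_1,\dots,a_k\}$ with $d=k$ and the trivial spreading $l_i=a_i$ gives $\|T(\sum_{i=1}^k q_i x_{a_i})\|\ge\tfrac1m\|\sum_{i=1}^k q_i x_{a_i}\|$ for all rationals $q_i$; by density of $\qq$, continuity of $T$, and letting $k$ vary, $\|T(x)\|\ge\tfrac1m\|x\|$ for every $x\in\sspan\{x_{a_i}:i\in\nn\}$. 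Thus $T$ restricted to the infinite-dimensional subspace $\overline{\sspan}\{x_{a_i}:i\in\nn\}$ is an isomorphic embedding, contradicting strict singularity. Hence $S\in\wf$ and $o(S)<\omega_1$, so $S$ is a regular family.

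Finally I would establish the bound $o(S)\le\omega^{\xi+1}$. Since $\varrho(T)=\xi$ and $\sss_\zeta$-strict singularity is preserved as $\zeta$ increases (so that $\sss_\zeta$-strict singularity holds for every $\zeta>\xi=\inf$ in \eqref{1e1}), $T$ is $\sss_{\xi+1}$-strictly singular. I would then compare the regular families $S$ and $\sss_{\xi+1}$ via the dichotomy for hereditary families (Gasparis; see \cite{AT}): there is an infinite $N$ with either $S[N]\subseteq\sss_{\xi+1}$ or $\sss_{\xi+1}[N]\subseteq S$. The second alternative is impossible, for applying $\sss_{\xi+1}$-strict singularity to the normalized basic sequence $(x_n)_{n\in N}$ with $\ee<\tfrac1m$ produces $H\in\sss_{\xi+1}$ and a norm-one $x\in\sspan\{x_n:n\in N(H)\}$ with $\|T(x)\|<\tfrac1m$, while $F:=N(H)\in\sss_{\xi+1}[N]\subseteq S$ forces $\|T(x)\|\ge\tfrac1m$ (here $F\in\sss_{\xi+1}$ by Fact \ref{2f8}(i), and the lower bound comes from the trivial spreading in \eqref{4e6} together with density of $\qq$). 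Hence the first alternative holds, and since inclusion is monotone we get, using Fact \ref{2f8}(ii) and Fact \ref{2f9}, $o(S)=o(S[N])\le o(\sss_{\xi+1})=\omega^{\xi+1}$. The only genuinely external input is the comparison/dichotomy theorem for hereditary families, which is where I expect the main work to sit; the remaining point worth flagging is that the ``$+1$'' is exactly what allows one to invoke $\sss_{\xi+1}$-strict singularity and thereby sidestep the question of whether the infimum in \eqref{1e1} is attained at $\xi$.
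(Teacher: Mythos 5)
Your proof is correct and essentially reproduces the paper's own argument: the same Gasparis dichotomy applied to $S(T,(x_n),m)$ and $\sss_{\xi+1}$, combined with Facts \ref{2f8} and \ref{2f9} to convert the inclusion $S[N]\subseteq\sss_{\xi+1}$ into the order bound, merely arranged as a direct proof rather than the paper's proof by contradiction (which assumes $o\big(S(T,(x_n),m)\big)>\omega^{\xi+1}$ and derives $\varrho(T)>\xi+1$). The one point you should make explicit is that the monotonicity you invoke -- that $\sss_\zeta$-strict singularity passes to larger indices $\zeta$ -- is not automatic, since the Schreier families are not nested under inclusion; it is precisely \cite[Proposition 2.4]{ADST}, the same external result the paper cites at the corresponding step.
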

\begin{proof}
For notational simplicity, let us denote by $\fff$ the tree $S(T,(x_n),m)$.
It is clear from the definition that $\fff$ is a hereditary and spreading
family of finite subsets of $\nn$. It is easy to see that $\fff$ is in addition
well-founded. This implies that $\fff$ is compact in $2^\nn$. Hence, $\fff$
is a regular family.

We work now to prove that $o(\fff)\leq \omega^{\xi+1}$. We argue by contradiction.
So, assume that $o(\fff)>\omega^{\xi+1}$. A result of I. Gasparis \cite{Ga}
asserts that if $\gggg$ and $\hhh$ are two hereditary families of finite subsets
of $\nn$, then there exists $L\in[\nn]$ such that either $\gggg[L]\subseteq \hhh$
or $\hhh[L]\subseteq \gggg$. Applying this dichotomy to the families $\fff$ and
$\sss_{\xi+1}$ we find $L=\{l_1<l_2<...\}\in[\nn]$ such that either
$\fff[L]\subseteq\sss_{\xi+1}$ or $\sss_{\xi+1}[L]\subseteq \fff$. We claim that
the first case is impossible. Indeed, assume on the contrary that
$\fff[L]\subseteq \sss_{\xi+1}$. By Fact \ref{2f8}(ii) and Fact \ref{2f9}, we see that
\[ \omega^{\xi+1}< o(\fff)=o\big(\fff[L]\big)\leq o(\sss_{\xi+1})=\omega^{\xi+1}\]
which is clearly impossible. Hence, $\sss_{\xi+1}[L]\subseteq \fff$.

Introduce now the sequence $(z_n)$ defined by the rule that $z_n=x_{l_n}$ for every
$n\in\nn$. Clearly $(z_n)$ is a normalized basic sequence in $X$. Let $F\in\sss_{\xi+1}$
be arbitrary and non-empty. The family $\sss_{\xi+1}$ is regular.
Hence, by Fact \ref{2f8}(i), we get that
\[ L(F)=\{l_n:n\in F\}\in \sss_{\xi+1}(L)\subseteq \sss_{\xi+1}[L]\subseteq \fff.\]
By the definition of $\fff$ and the continuity of the operator $T$, we see that for
every choice $(a_n)_{n\in F}$ of reals we have
\[ \big\| T\big( \sum_{n\in F} a_n z_n \big) \big\|=
\big\| T\big( \sum_{n\in F} a_n x_{l_n}\big) \big\| \geq \frac{1}{m}
\big\| \sum_{n\in F} a_n x_{l_n} \big\| = \frac{1}{m}
\big\| \sum_{n\in F} a_n z_n \big\|. \]
In other words, we conclude that for every non-empty set $F\in\sss_{\xi+1}$ and every
norm-one vector $z\in\sspan\{z_n:n\in F\}$ we have $\|T(z)\|\geq m^{-1}$. This implies
that $T$ is not $\sss_{\xi+1}$-strictly singular. By \cite[Proposition 2.4]{ADST},
the operator $T$ is not $\sss_\zeta$-strictly singular for every $1\leq\zeta\leq\xi+1$,
and so, $\varrho(T)>\xi+1$. This is a contradiction. Therefore, $o(\fff)\leq \omega^{\xi+1}$
and the proof is completed.
\end{proof}
As a consequence we get the following result which shows that the family of trees
$\{S(T,(x_n),m):(x_n)\in \bbb \text{ and } m\in\nn\}$ can be used to compute the
ordinal $\varrho(T)$ quite accurately.
\begin{cor} \label{4c17}
Let $T\in\sss\sss(X,Y)$ with $\varrho(T)=\xi$. Then
\begin{equation} \label{4e8}
\sup\{\omega^{\zeta}:\zeta<\xi\} \leq \sup\big\{ o\big(S(T,(x_n),m)\big): (x_n)\in\bbb \text{ and }
m\in \nn\big\} \leq \omega^{\xi+1}.
\end{equation}
\end{cor}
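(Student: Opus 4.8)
The right-hand inequality is immediate from Claim \ref{4c16}: since $o\big(S(T,(x_n),m)\big)\le\omega^{\xi+1}$ for every $(x_n)\in\bbb$ and every $m\in\nn$, the middle supremum is bounded above by $\omega^{\xi+1}$. All the content lies in the left-hand inequality, and my plan is to prove the following statement: for every ordinal $\zeta$ with $1\le\zeta<\xi$ there exist $(x_n)\in\bbb$ and $m\in\nn$ with $\omega^\zeta\le o\big(S(T,(x_n),m)\big)$. Taking the supremum over all such $\zeta$ will then yield the desired bound, the only remaining value being $\omega^0=1$ (relevant when $\xi=1$), which is covered by the trivial observation that every tree $S(T,(x_n),m)$ contains $\varnothing$ and so has order at least $1$.

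To carry this out, I would fix $\zeta$ with $1\le\zeta<\xi$. Since $\zeta<\xi=\varrho(T)=\inf\{\eta:T\text{ is }\sss_\eta\text{-strictly singular}\}$, the operator $T$ fails to be $\sss_\zeta$-strictly singular. Unwinding Definition \ref{1d1}, this failure hands me a single normalized basic sequence $(x_n)\in\bbb$ and a real $\ee>0$ such that $\|T(x)\|>\ee$ for every non-empty $F\in\sss_\zeta$ and every norm-one vector $x\in\sspan\{x_n:n\in F\}$. I would then fix an integer $m$ with $1/m\le\ee$.

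The crux of the argument, and the step I expect to require the most care, is to verify the inclusion $\sss_\zeta\subseteq S(T,(x_n),m)$, where each set of $\sss_\zeta$ is identified with the corresponding strictly increasing sequence. Given $s=(n_1<\dots<n_k)\in\sss_\zeta$, I must check the defining condition \eqref{4e6}: for $d\le k$, a spreading $(l_1<\dots<l_d)$ with $n_i\le l_i$, and rationals $a_1,\dots,a_d$, one first notes that $\{n_1,\dots,n_d\}\in\sss_\zeta$ because $\sss_\zeta$ is hereditary, and hence that $\{l_1,\dots,l_d\}\in\sss_\zeta$ because $\sss_\zeta$ is spreading (Definition \ref{2d7}). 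Thus $\sum_{i=1}^d a_ix_{l_i}$ lies in $\sspan\{x_n:n\in F\}$ for the non-empty set $F=\{l_1,\dots,l_d\}\in\sss_\zeta$, so the witnessing property of $(x_n)$ and $\ee$ gives $\|T(\sum_{i=1}^d a_ix_{l_i})\|>\ee\,\|\sum_{i=1}^d a_ix_{l_i}\|\ge\frac{1}{m}\|\sum_{i=1}^d a_ix_{l_i}\|$ (the vanishing sum being trivial), which is exactly what \eqref{4e6} demands; hence $s\in S(T,(x_n),m)$. The heart of the matter is precisely here: the spreading property of the Schreier family is what allows the \emph{single} witness $(x_n)$ to control every spreading required by the definition of the tree.

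Once the inclusion is established, the identity map is a monotone map from the well-founded tree $\sss_\zeta$ into the well-founded tree $S(T,(x_n),m)$ (the latter being well-founded by Claim \ref{4c16}), so the monotone-map principle of \S 2.1 together with Fact \ref{2f9} yields $\omega^\zeta=o(\sss_\zeta)\le o\big(S(T,(x_n),m)\big)$. Consequently $\omega^\zeta$ is at most the middle supremum for every $1\le\zeta<\xi$, and taking the supremum over $\zeta$ completes the proof of the left-hand inequality.
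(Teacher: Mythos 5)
Your proof is correct and follows essentially the same route as the paper: the upper bound is read off from Claim \ref{4c16}, and for the lower bound you extract a witness $(x_n)\in\bbb$ and $\ee>0$ from the failure of $\sss_\zeta$-strict singularity, establish the inclusion $\sss_\zeta\subseteq S(T,(x_n),m)$ via the hereditary and spreading properties, and conclude with the monotone identity map and Fact \ref{2f9}. Your explicit handling of the $\xi=1$ case (via $o\geq 1$) is a minor addition that the paper covers by simply saying ``we may assume $\xi>1$''.
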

\begin{proof}
The second inequality follows immediately by Claim \ref{4c16}. We work to prove the
first inequality. Clearly we may assume that $\xi>1$. Let $\zeta$ be an arbitrary
countable ordinal with $1\leq\zeta<\xi$. Since $\varrho(T)>\zeta$, the operator
$T$ is not $\sss_\zeta$-strictly singular. Therefore, we may find $(x_n)\in\bbb$
and $\ee>0$ such that for every non-empty set $F\in\sss_\zeta$ and every $x\in\sspan
\{x_n:n\in F\}$ we have $\|T(x)\|\geq \ee\|x\|$. We select $m\in\nn$ such that
$\ee\geq m^{-1}$. The family $\sss_\zeta$ is spreading and hereditary. Hence, by the
definition of the tree $S(T,(x_n),m)$, we see that $F\in S(T,(x_n),m)$ for every $F\in\sss_\zeta$.
In particular, the identity map $\mathrm{Id}:\sss_\zeta\to S(T,(x_n),m)$ is a well-defined
monotone map. Therefore, by Fact \ref{2f9}, we see that
\[ \omega^{\zeta}=o(\sss_\zeta)\leq o\big(S(T,(x_n),m)\big) \]
and the result follows.
\end{proof}
Now, define $\ppp\subseteq \llll(X,Y)\times \tr$ by the rule
\begin{equation} \label{4e9}
(T,R)\in\ppp \Leftrightarrow \exists (x_n)\in\bbb \text{ and }
\exists m\in\nn \text{ such that } R=S(T,(x_n),m).
\end{equation}
By Fact \ref{4f14}, we see that the set $\ppp$ is analytic. As in
Fact \ref{2f6}, let $\ppp^{\sharp}\subseteq\llll(X,Y)$ be defined by
\[ T\in\ppp^{\sharp} \Leftrightarrow \forall R\in \tr \ [(T,R)\in\ppp
\Rightarrow R\in\wf].\]
By Fact \ref{4f15} and Claim \ref{4c16}, we get that $\ppp^{\sharp}=
\sss\sss(X,Y)$. Let $\varphi:\ppp^{\sharp}\to\omega_1$ be the
co-analytic rank on $\ppp^{\sharp}$ obtained in Fact \ref{2f6}.

We define
\begin{equation} \label{4e10}
\rs(T)=\varphi(T)+1
\end{equation}
for every $T\in\sss\sss(X,Y)$ and we claim that $\rs$ is the desired rank.
Clearly $\rs$ is a co-analytic rank on $\sss\sss(X,Y)$. It remains to check
that $\varrho(T)\leq\rs(T)$ for every $T\in\sss\sss(X,Y)$. To this end,
fix $T\in\sss\sss(X,Y)$. By Fact \ref{2f6}, we have that
\begin{equation} \label{4e11}
\sup\{ o(R): (T,R)\in\ppp\}\leq \varphi(T)
\end{equation}
while by the definition of the set $\ppp$ we get that
\begin{equation} \label{4e12}
\sup\big\{ o\big(S(T,(x_n),m)\big): (x_n)\in\bbb \text{ and } m\in\nn\big\}=
\sup\{ o(R): (T,R)\in\ppp\}.
\end{equation}
Finally, notice that
\begin{equation} \label{4e13}
\xi\leq \sup\{\omega^{\zeta}:\zeta<\xi\}+1
\end{equation}
for every countable ordinal $\xi$. Combining inequalities (\ref{4e8}), (\ref{4e11}),
(\ref{4e12}) and (\ref{4e13}) we conclude that $\varrho(T)\leq\rs(T)$ as desired.

Finally, to see that the rank $\varrho$ satisfies boundedness, let $\aaa$ be an
analytic subset of $\sss\sss(X,Y)$. The rank $\rs$ is a co-analytic rank. Therefore,
there exists a countable ordinal $\xi$ such that $\rs(T)\leq\xi$ for every
$T\in\aaa$. Hence, $\sup\{\varrho(T):T\in\aaa\}\leq\xi<\omega_1$.

The proof of Theorem \ref{1t2} is completed.


\section{The spaces $Z_{p,q}$ $(1\leq p\leq q<+\infty)$}

This section contains some results which are needed for the proof of Theorem \ref{1t4}
stated in the introduction. It is organized as follows. In \S 5.1 we introduce
some pieces of notation. In \S 5.2 we define the space $Z_{p,q}$ $(1\leq p\leq q<+\infty)$
and we gather some of its basic properties. Finally, in \S 5.3 we present a
result concerning a class of sequences in $Z_{p,q}$ which we call
``asymptotically sparse".

\subsection{Notation}

For the rest of the paper we fix a bijection $\chi:\bt\to \nn$ satisfying $\chi(s)<\chi(t)$
for every $s,t\in\bt$ with $s\sqsubset t$.

Let $s,t\in\bt$. The nodes $s$ and $t$ are said to be \textit{comparable} if either
$s\sqsubseteq t$ or $t\sqsubseteq s$; otherwise $s$ and $t$ are said to be \textit{incomparable}.
A subset of $\bt$ consisting of pairwise comparable nodes is said to be a \textit{chain}
while a subset of $\bt$ consisting of pairwise incomparable nodes is said to be an
\textit{antichain}. A \textit{branch} of $\bt$ is a maximal chain of $\bt$. The branches
of $\bt$ are naturally identified with the elements of $\nn^\nn$; indeed, a subset $A$
of $\bt$ is a branch if and only if there exists $\sg\in\nn^\nn$ (unique) such that
$A=\{\sg|n:n\geq 0\}$. Two subsets $A$ and $B$ of $\bt$ are said to
be \textit{incomparable} if for every $s\in A$ and every $t\in B$ the nodes $s$ and $t$
are incomparable. A \textit{segment} $\seg$ of $\bt$ is a chain of $\bt$ satisfying
\begin{equation} \label{5e14}
\forall s, t, s'\in\bt \ (s\sqsubseteq t\sqsubseteq s' \text{ and } s, s'\in\seg
\Rightarrow t\in\seg).
\end{equation}
If $\seg$ is a segment of $\bt$, then by $\min(\seg)$ we denote the
$\sqsubseteq$-minimal node of $\seg$. Notice that two segments $\seg$ and $\seg'$
are incomparable if and only if the nodes $\min(\seg)$ and $\min(\seg')$
are incomparable. If $\sg$ is a branch of $\bt$ and $k\geq 0$, then the set $\{\sg|n:n\geq k\}$
is said to be a \textit{final segment} of $\sg$ while the set $\{\sg|n:n\leq k\}$ is
said to be an \textit{initial segment} of $\sg$.

\subsection{Definitions and basic properties}

We start with the following.
\begin{defn} \label{5d18}
Let $1\leq p\leq q<+\infty$. We define $Z_{p,q}$ to be the completion of $c_{00}(\bt)$
equipped with the norm
\begin{equation} \label{5e15}
\|z\|_{Z_{p,q}}=\sup\Big\{ \Big( \sum_{i=1}^d \big( \sum_{t\in\seg_i} |z(t)|^p\big)^{q/p} \Big)^{1/q} \Big\}
\end{equation}
where the above supremum is taken over all families $(\seg_i)_{i=1}^d$ of pairwise
incomparable non-empty segments of $\bt$.
\end{defn}
The space $Z_{p,q}$ is a variant of James tree space $JT$ \cite{Ja}.
We notice that spaces of this form have found significant applications and have been extensively
studied by several authors (see, for instance, \cite{AD,Bos,Bou,D,DL,Fe}). We gather,
below, some elementary properties of the space $Z_{p,q}$.

Let $\{z_t:t\in\bt\}$ be the standard Hamel basis of $c_{00}(\bt)$ and
$(t_n)$ be the enumeration of $\bt$ according to the bijection $\chi$ (see \S 5.1).
The sequence $(z_{t_n})$ defines an $1$-unconditional basis of $Z_{p,q}$.
For every node $t$ of $\bt$ by $z^*_t$ we shall denote the bi-orthogonal functional
associated to $z_t$. For every vector $z$ in $Z_{p,q}$ the \textit{support}
$\supp(z)$ of $z$ is defined to be the set $\{t\in\bt: z^*_t(z)\neq 0\}$.

For every $A\subseteq \bt$ non-empty let
\begin{equation} \label{5e16}
Z^A_{p,q}=\overline{\mathrm{span}}\{z_t:t\in A\}.
\end{equation}
The subspace $Z^A_{p,q}$ of $Z_{p,q}$ is complemented via the natural projection
\begin{equation} \label{5e17}
P_A:Z_{p,q}\to Z^A_{p,q}.
\end{equation}
Notice that $\|P_A\|=1$. Observe that for every non-empty chain $\mathfrak{c}$ of $\bt$
and every vector $z$ in $Z_{p,q}$ we have
\begin{equation} \label{5e18}
\|P_{\mathfrak{c}}(z)\|=\Big( \sum_{t\in\mathfrak{c}} |z^*_t(z)|^p \Big)^{1/p}.
\end{equation}
In particular, for every branch $\sg$ of $\bt$ the subspace $Z^\sg_{p,q}$ of $Z_{p,q}$
is isometric to $\ell_p$ and complemented via the norm-one
projection $P_\sg:Z_{p,q}\to Z^\sg_{p,q}$.

Let $X$ and $E$ be infinite-dimensional Banach spaces. Recall that the space $X$
is said to be \textit{hereditarily} $E$ if every infinite-dimensional subspace of
$X$ contains an isomorphic copy of $E$. We will need the following easy
(and essentially known) fact concerning the structure of the space $Z^S_{p,q}$ when
$S$ is a well-founded tree. The proof is sketched for completeness.
\begin{fact} \label{5f19}
Let $1\leq p\leq q<+\infty$ and $S\in\wf$. Then the space $Z^S_{p,q}$ is either
finite-dimensional or hereditarily $\ell_q$.
\end{fact}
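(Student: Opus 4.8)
The plan is to argue by transfinite induction on the order $o(S)$, exploiting the self-similar, tree-like structure of the norm \eqref{5e15}. The base case is immediate: if $o(S)\leq 1$ then $S\subseteq\{\varnothing\}$, so $Z^S_{p,q}$ is at most one-dimensional. For the inductive step the idea is to peel off the root $\varnothing$ and decompose what remains according to the children of the root. Writing $S|_{(n)}=\{t\in S: t(1)=n\}$ for the part of $S$ lying in the $n$-th subtree, the node-sets $\{S|_{(n)}\}_n$ are \emph{pairwise incomparable} (a node beginning with $n$ is incomparable with one beginning with $n'\neq n$), and the shift $t\mapsto (n)^\frown t$ identifies $Z^{S|_{(n)}}_{p,q}$ isometrically with $Z^{S_n}_{p,q}$, where $S_n=\{s:(n)^\frown s\in S\}$ satisfies $o(S_n)<o(S)$. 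Thus the induction hypothesis applies to each child space.

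The key computation is that incomparable node-sets combine isometrically in $\ell_q$. Concretely, I would show that if $(B_n)$ are pairwise incomparable subsets of $\bt$ and $y_n$ is supported in $B_n$, then $\big\|\sum_n y_n\big\|_{Z_{p,q}}=\big(\sum_n\|y_n\|_{Z_{p,q}}^q\big)^{1/q}$. For the upper bound one notes that any segment $\seg_i$ in an admissible incomparable family is a chain, hence meets at most one $B_n$; grouping the segments by the unique index they hit and using that $q\geq p$ together with the definition of the norm on each $B_n$ gives $\sum_i\big(\sum_{t\in\seg_i}|z(t)|^p\big)^{q/p}\leq\sum_n\|y_n\|^q$. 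The lower bound follows by taking near-optimal segment families inside each $B_n$ and observing that their union is again an incomparable family, since segments drawn from incomparable node-sets are incomparable. Applying this to $(S|_{(n)})_n$ yields the isometric identification
\[ E:=\overline{\mathrm{span}}\{z_t: t\in S,\ t\neq\varnothing\}=\Big(\bigoplus_n Z^{S|_{(n)}}_{p,q}\Big)_{\ell_q}, \]
an $\ell_q$-sum in which, by the induction hypothesis, every summand is finite-dimensional or hereditarily $\ell_q$.

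It then remains to reinsert the root and to pass from the summands to $E$ and to $Z^S_{p,q}$. Since the basis is $1$-unconditional, $E=\ker z^*_\varnothing\cap Z^S_{p,q}$ is $1$-complemented via $P_{\bt\setminus\{\varnothing\}}$, and $Z^S_{p,q}=\langle z_\varnothing\rangle\oplus E$ is a one-dimensional extension of $E$. Hence for any infinite-dimensional $W\subseteq Z^S_{p,q}$ the subspace $W\cap\ker z^*_\varnothing$ has codimension at most one in $W$ and lies in $E$, so it suffices to prove that $E$ is finite-dimensional or hereditarily $\ell_q$. This reduces the whole matter to the standard fact that an $\ell_q$-sum of spaces, each of which is finite-dimensional or hereditarily $\ell_q$, is itself finite-dimensional or hereditarily $\ell_q$: if infinitely many summands are nonzero, a gliding-hump argument produces in any infinite-dimensional subspace a normalized block sequence of the decomposition, and disjointly supported blocks in an $\ell_q$-sum are \emph{isometrically} equivalent to the $\ell_q$-basis; the remaining case of finitely many summands with an infinite-dimensional (hereditarily $\ell_q$) one is classical.

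The main obstacle, and the reason the induction is the right vehicle, is that one cannot hope to run the naive alternative of extracting from an arbitrary block sequence a subsequence with pairwise incomparable supports: in a well-founded tree there can exist infinitely many pairwise disjoint finite sets that are \emph{pairwise comparable} (for instance $A_m=\{(m)\}\cup\{(k,m):1\leq k\leq m-1\}$ in the height-two tree), so a Ramsey argument on the comparability relation does not close. The delicate points to get exactly right are therefore the isometry in the second paragraph (careful bookkeeping of how a single chain can meet only one incomparable block, and that optimizing families glue) and the verification that the lone extra coordinate $z_\varnothing$, although comparable to every other node, only perturbs $E$ by one dimension and so cannot destroy $\ell_q$-saturation.
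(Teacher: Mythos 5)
Your proposal follows the paper's own proof essentially verbatim in structure: induction on $o(S)$, peeling off the root and decomposing into the subtrees $S_n=\{s:(n)^\frown s\in S\}$ with $o(S_n)<o(S)$, identifying $Z^S_{p,q}$ (up to the one-dimensional root coordinate) with $\big(\sum_n\oplus Z^{S_n}_{p,q}\big)_{\ell_q}$, and invoking stability of the class ``finite-dimensional or hereditarily $\ell_q$'' under $\ell_q$-sums --- the only difference being that you write out the details (the isometric combination of incomparable supports, the treatment of $z_\varnothing$) that the paper compresses into ``the result follows.'' One inaccuracy worth fixing in your sketch of the $\ell_q$-sum stability: the claim that, when infinitely many summands are nonzero, a gliding hump produces a block sequence of the decomposition in \emph{any} infinite-dimensional subspace is false as stated (a subspace sitting inside a single infinite-dimensional summand admits no such blocks); the correct formulation is the standard dichotomy --- for a given subspace, either some finite-coordinate projection is an isomorphism on a further infinite-dimensional subspace, which reduces matters to the finite-sum case you already call classical, or else the hump glides --- and with that substitution your argument closes.
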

\begin{proof}
We proceed by induction on the order of the tree $S$. If $o(S)=1$, then the
space $Z^S_{p,q}$ is one-dimensional. Let $S\in\wf$ with $o(S)>1$ and assume
that the result has been proved for every $R\in\wf$ with $o(R)<o(S)$. We set
\[L_S=\{n\in\nn: (n)\in S\}. \]
For every $n\in L_S$ let $S_n=\{t\in\bt:n^{\con}t\in S\}$
and notice that $S_n\in\wf$ and $o(S_n)<o(S)$. Therefore, by our
induction hypothesis, the space $Z^{S_n}_{p,q}$ is
either finite-dimensional or hereditarily $\ell_q$. Noticing that
the space $Z^S_{p,q}$ is isomorphic to the space
\[ \rr\oplus \Big( \sum_{n\in L_S} \oplus Z^{S_n}_{p,q}\Big)_{\ell_q} \]
the result follows.
\end{proof}

\subsection{Asymptotically sparse sequences in $Z_{p,q}$}

We start by introducing the following definition.
\begin{defn} \label{5d20}
Let $1\leq p\leq q<+\infty$. We say that a bounded block sequence $(y_n)$ in $Z_{p,q}$
is \emph{asymptotically sparse} if for every $k\in\nn$ and every $\sg\in\nn^\nn$
we have
\begin{equation} \label{5e19}
|\{n\geq k: \|P_\sg(y_n)\|\geq 2^{-k}\}|\leq 1.
\end{equation}
\end{defn}
Notice that if $(y_n)$ is an asymptotically sparse sequence,
then $\|P_\sg(y_n)\|\to 0$ for every $\sg\in\nn^\nn$.
The main result of this subsection asserts that (essentially) the converse
is also true. Precisely, we have the following.
\begin{lem} \label{5l21}
Let $1\leq p\leq q<+\infty$ and $(y_n)$ be a bounded block sequence in $Z_{p,q}$
such that $\|P_\sg(y_n)\|\to 0$ for every $\sg\in\nn^\nn$. Then $(y_n)$
has an asymptotically sparse subsequence.
\end{lem}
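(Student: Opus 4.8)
The plan is to fix $\epsilon_k=2^{-k}$ and to recast the whole statement in terms of \emph{heavy branches}. Since $\|P_\sigma(y_n)\|=\big(\sum_{t\sqsubset\sigma}|z^*_t(y_n)|^p\big)^{1/p}$ by (\ref{5e18}), the quantity $\|P_\sigma(y_n)\|$ depends only on the finite chain $\sigma\cap\supp(y_n)$. I would first record the elementary reformulation: a subsequence $(y_{n_j})$ is asymptotically sparse if and only if for all positions $i<j$ and every branch $\sigma$ one does \emph{not} have both $\|P_\sigma(y_{n_i})\|\geq 2^{-i}$ and $\|P_\sigma(y_{n_j})\|\geq 2^{-i}$; indeed, taking $k=i$ in Definition \ref{5d20} shows the two conditions are equivalent. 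Thus the goal becomes: extract a subsequence so that no two terms share a branch on which both are heavy at the threshold attached to the earlier term.

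Next I would describe the heavy sets combinatorially. For $n\in\nn$ and $\epsilon>0$ let $M^\epsilon_n$ be the antichain of $\sqsubseteq$-minimal nodes $u$ with $\big(\sum_{t\sqsubseteq u}|z^*_t(y_n)|^p\big)^{1/p}\geq\epsilon$. Then $H^\epsilon_n:=\{\sigma:\|P_\sigma(y_n)\|\geq\epsilon\}=\bigcup_{u\in M^\epsilon_n}\{\sigma: u\sqsubset\sigma\}$, a finite union of cylinders, and two such sets for different $n$ meet exactly when a node of one frontier extends a node of the other. Because $(y_n)$ is a block sequence, the nodes of $M^\epsilon_n$ lie in $\supp(y_n)$, so by the choice of $\chi$ two frontier nodes from different terms are comparable only in the direction earlier-below-later. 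Hence the reformulation above says precisely: choose $(y_{n_j})$ so that for all $i<j$ the antichains $M^{2^{-i}}_{n_i}$ and $M^{2^{-i}}_{n_j}$ are cross-incomparable (no frontier node of $y_{n_j}$ extends one of $y_{n_i}$). The hypothesis $\|P_\sigma(y_n)\|\to 0$ enters through one clean consequence: there is \emph{no} infinite $\sqsubseteq$-chain $u_1\sqsubset u_2\sqsubset\cdots$ with $u_r\in M^\epsilon_{m_r}$ for some $m_1<m_2<\cdots$, since the branch $\sigma^*=\bigcup_r u_r$ would satisfy $\|P_{\sigma^*}(y_{m_r})\|\geq\epsilon$ for every $r$, contradicting pointwise convergence.

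The extraction itself I would carry out one threshold at a time and then diagonalize. Fix $\epsilon=2^{-k}$ and consider the coloring of pairs $\{m<m'\}$ by whether $M^\epsilon_m$ and $M^\epsilon_{m'}$ are cross-incomparable or not. An infinite homogeneous set of the first colour is exactly an infinite set $L_k$ with pairwise cross-incomparable $2^{-k}$-frontiers, which is what I want. To rule out the other colour I would argue that an infinite subsequence on which \emph{every} pair has comparable frontiers produces an infinite $\sqsubseteq$-chain of frontier nodes running through increasing indices: working in the poset of frontier nodes tagged by their index (ordered by extension together with index), infinite Ramsey on its comparability graph yields either such a chain or an infinite cross-incomparable set, and the chain is forbidden by the previous paragraph. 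Running this for each $k$ inside the previously obtained set gives $L_1\supseteq L_2\supseteq\cdots$ with $L_k$ having pairwise cross-incomparable $2^{-k}$-frontiers, and the diagonal choice $n_j\in L_j$ (strictly increasing) does the job: for $i<j$ one has $n_i,n_j\in L_i$, so $M^{2^{-i}}_{n_i}$ and $M^{2^{-i}}_{n_j}$ are cross-incomparable, which is exactly the condition matching the position-dependent threshold $2^{-i}$. By the reformulation, $(y_{n_j})$ is asymptotically sparse.

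The main obstacle is the single-threshold extraction, precisely because the convergence is only pointwise: a fixed shallow node can carry heavy mass of infinitely many $y_n$ (the self-similar configuration where each $y_n$ concentrates on a deep chain above a common node), so one cannot hope to make later terms $g$-light above a once-chosen frontier node, and a naive greedy construction gets stuck. The subtle point is that the frontier antichains $M^\epsilon_n$, while finite, are \emph{not} uniformly bounded in size (a shared stem that then fans out into many children produces many crossing nodes with bounded norm), so the comparability coloring cannot be reduced to a finite palette and a plain pigeonhole fails. The resolution that I expect to require the most care is the passage from ``every pair of a subsequence has comparable frontiers'' to an actual infinite extension chain of frontier nodes; here one must use the tree order and a Ramsey- or rank-type argument on the comparability structure of tagged frontier nodes, and it is exactly the no-infinite-chain consequence of $\|P_\sigma(y_n)\|\to 0$ that closes the contradiction.
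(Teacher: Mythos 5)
Your global plan---recasting sparsity in terms of branches that are heavy for two terms, extracting a good subsequence one threshold at a time via Ramsey's theorem, then nesting over $k$ and diagonalizing---is exactly the architecture of the paper's proof (its Claim \ref{5c22} plus the nested sets $L_1\supseteq L_2\supseteq\cdots$ and the diagonal choice), and your reformulation of Definition \ref{5d20}, the finiteness of the frontier antichains $M^\ee_n$, and the ``no infinite chain through increasing indices'' consequence of $\|P_\sg(y_n)\|\to 0$ are all correct. The proof breaks at precisely the step you single out as delicate: ruling out the Ramsey color in which every pair of frontiers is cross-linked. Your proposed resolution applies Ramsey a second time, to the comparability graph of the tagged poset $P=\{(u,m): u\in M^\ee_m\}$, and claims the outcome is ``either an infinite chain or an infinite cross-incomparable set''. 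The chain horn is indeed forbidden, but the other horn is an infinite \emph{antichain of tagged nodes}, i.e., one representative $u_i\in M^\ee_{m_i}$ per index with the $u_i$ pairwise incomparable; this does not make the frontiers $M^\ee_{m_i}$ pairwise cross-incomparable, since a link between two frontiers may pass through nodes other than the chosen representatives. So the second horn neither contradicts the assumed bad homogeneity nor produces the set $L_k$ you need, and the contradiction never closes.

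The deeper problem is that the implication you are after at that point---pairwise cross-linked frontiers together with no infinite chain is impossible---is genuinely false, because your argument never uses that $(y_n)$ is \emph{bounded}, and the lemma fails without boundedness. Take pairwise incomparable ``hub'' nodes $h_m$ of length one and, for $k<m$, pairwise distinct children $\ell_{k,m}$ of $h_k$ (chosen recursively so that the supports below form a block sequence with respect to $\chi$), and set $y_m=z_{h_m}+\sum_{k<m} z_{\ell_{k,m}}$. Every branch meets $\supp(y_m)$ for at most two values of $m$, so $\|P_\sg(y_m)\|\to 0$; all frontier nodes (threshold $1$) lie on levels one and two, so no infinite chain exists; yet every pair of frontiers is linked via $h_k\sqsubset\ell_{k,m}$, and no subsequence satisfies (\ref{5e19}), because for positions $i<j$ the branch through $h_{m_i}$ and $\ell_{m_i,m_j}$ is $1$-heavy for both $y_{m_i}$ and $y_{m_j}$. (Of course $\|y_m\|=m^{1/q}\to\infty$.) Thus any correct proof must inject the norm bound into the extraction, and this is what the paper does in Claim \ref{5c22}: fixing the later index $k$, each heavy branch $\sg_{m,k}$ $(m<k)$ is truncated at $\supp(y_k)$; distinct truncations yield pairwise incomparable final segments, each $\theta$-heavy for $y_k$, and since incomparable segments aggregate in $\ell_q$ below $\|y_k\|\leq C$, the number of distinct truncations is at most $\lceil C^q/\theta^q\rceil$. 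It is this uniform bound on the number of ``entry segments'' (not on the size of the frontiers, which, as you correctly observe, need not be bounded) that permits a compactness argument in $2^{\bt}$ plus a pigeonhole to produce one fixed initial segment on which infinitely many $y_m$ are $\theta$-heavy, contradicting $\|P_\sg(y_n)\|\to 0$ along any branch extending it.
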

Lemma \ref{5l21} is a Ramsey-theoretical result and the arguments in its proof
can be traced in the work of I. Amemiya and T. Ito \cite{AI} concerning the structure
of normalized weakly null sequences in the space $JT$. We proceed to the proof.
\begin{proof}[Proof of Lemma \ref{5l21}]
Let $1\leq p\leq q<+\infty$ and fix a bounded block sequence $(y_n)$ in $Z_{p,q}$
such that $\|P(y_n)\|\to 0$ for every $\sg\in\nn^\nn$. We select $C>0$ such that
$\|y_n\|\leq C$ for every $n\in\nn$.
\begin{claim} \label{5c22}
For every $\theta>0$ and every $M\in[\nn]$ there exists $L\in[M]$ such that
for every $\sg\in \nn^\nn$ we have $|\{n\in L:\|P_\sg(y_n)\|\geq\theta\}|\leq 1$.
\end{claim}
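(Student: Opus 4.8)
The plan is to reformulate Claim \ref{5c22} as a Ramsey-type extraction problem on the tree $\bt$ and then to resolve it by an infinite Ramsey dichotomy. First I would translate the analytic condition into a combinatorial one. By (\ref{5e18}), for every branch $\sigma$ of $\bt$ we have $\|P_\sigma(y_n)\|=\big(\sum_{t\in\sigma}|z^*_t(y_n)|^p\big)^{1/p}$, so whether $\sigma$ is ``$\theta$-heavy'' for $y_n$ depends only on the finite set $\sigma\cap\supp(y_n)$. Hence to each $n$ I would attach the finite antichain $A_n$ of $\sqsubseteq$-minimal nodes $t$ for which the initial segment of the branch through $t$ already carries $\ell_p$-mass at least $\theta$; then $\|P_\sigma(y_n)\|\geq\theta$ if and only if $\sigma$ passes through some node of $A_n$. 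In these terms the conclusion of the Claim becomes: there is $L\in[M]$ such that the families $\{A_n:n\in L\}$ are pairwise incomparable (no node of $A_m$ is comparable to a node of $A_n$ for $m\neq n$), since two heavy-branch sets are disjoint exactly when the corresponding antichains are incomparable. Two facts feed the argument: the hypothesis $\|P_\sigma(y_n)\|\to 0$ says precisely that every branch, and hence every single node, meets only finitely many of the $A_n$; and the uniform bound $\|y_n\|\leq C$ together with the $\ell_q$-sum in (\ref{5e15}) gives that each $y_n$ admits at most $(C/\theta)^q$ pairwise incomparable segments of $\ell_p$-mass $\geq\theta$.

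Next I would run an infinite Ramsey dichotomy. Color each pair $\{m,n\}\subseteq M$ according to whether $A_m$ and $A_n$ are incomparable or comparable, and apply Ramsey's theorem to obtain an infinite homogeneous set $L\subseteq M$. If $L$ is homogeneous in the incomparable colour, then $L$ is exactly the set we want and we are done. Everything therefore reduces to excluding the other alternative, namely an infinite set on which the $A_n$ are pairwise comparable.

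The main obstacle is precisely this exclusion. The natural target is to show that an infinite pairwise-comparable family produces a single branch $\sigma$ lying in infinitely many of the heavy-branch sets, which contradicts $\|P_\sigma(y_n)\|\to 0$. This is delicate because pairwise comparability does not by itself yield a common branch: cones in $\bs$ can intersect pairwise without a common point, so a Helly-type argument is unavailable, and a greedy descent fails because the comparabilities witnessing the clique may be realised by nodes lying in unrelated parts of the tree. The route I would take is a fusion (König-type) construction down $\bt$: maintain a stem $u_k$ and an infinite pool of indices whose heavy generators lie in the cone above $u_k$, and try to extend the stem by one level while keeping the pool infinite. The only way this can fail is that the pool spreads over infinitely many children of $u_k$, producing infinitely many pairwise incomparable heavy nodes coming from distinct vectors. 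Here is where the block structure and the norm bound must be used: since $\chi$ respects $\sqsubset$, the supports are ordered so that, for $m<n$, any comparability of $A_m$ and $A_n$ is witnessed by a node of $A_m$ lying above a node of $A_n$; combined with the bound $(C/\theta)^q$ on the incomparable width of each single $y_n$, the pairwise comparabilities of a spread-out subfamily are forced to accumulate into longer and longer common heavy chains, which in the limit is exactly the branch $\sigma$ sought. This is the mechanism underlying the Amemiya--Ito argument referred to before the proof, and it is the step I expect to require the most care.

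Once the clique alternative is excluded, the homogeneous set $L$ produced by Ramsey lies in the incomparable colour and satisfies $|\{n\in L:\|P_\sigma(y_n)\|\geq\theta\}|\leq 1$ for every $\sigma$, which is the assertion of Claim \ref{5c22}; the passage to the full asymptotically sparse subsequence of Definition \ref{5d20} is then obtained by diagonalising this single-threshold statement over the values $\theta=2^{-k}$, in the remaining part of the proof of Lemma \ref{5l21}.
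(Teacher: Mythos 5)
Your reformulation via the antichains $A_n$ of minimal $\theta$-heavy nodes, and the Ramsey dichotomy on pairs, are sound and coincide with the opening moves of the paper's proof (the paper colours a pair $\{m,k\}$ by whether some branch is $\theta$-heavy for both $y_m$ and $y_k$, which under your translation is exactly your colouring). The gap is in the only step that carries real weight: excluding an infinite set homogeneous in the comparable colour. Your fusion invariant --- keep a stem $u_k$ and an infinite pool of indices $n$ with $A_n\cap\{t:u_k\sqsubseteq t\}\neq\varnothing$ --- cannot yield the contradiction even when the fusion never breaks down, because membership of heavy nodes in the cone above the stem does not pass to the limit branch: since each $A_n$ is finite, every fixed index $n$ is expelled from the pool as soon as $|u_k|>\max\{|t|:t\in A_n\}$, so the pools can stay infinite at every finite stage while the limit branch $\sg=\bigcup_k u_k$ contains no node of any $A_n$; to contradict $\|P_\sg(y_n)\|\to 0$ you need $\sg$ to actually pass through $A_n$ for infinitely many $n$. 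In the breakdown case, your proposed mechanism (``comparabilities of a spread-out subfamily accumulate into longer and longer common heavy chains'') is asserted, not proved, and it is precisely the crux: infinitely many pairwise incomparable heavy nodes belonging to \emph{distinct} vectors contradict nothing, since the bound $\lceil C^q/\theta^q\rceil$ applies only to incomparable $\theta$-heavy segments of a \emph{single} vector. (Note also that $|A_n|$ itself admits no bound in terms of $C$ and $\theta$: mass just below the threshold concentrated on one node can spawn arbitrarily many minimal heavy nodes above it at negligible extra cost in norm, so antichain cardinalities are not controlled.) You flag this step yourself as the one requiring the most care; as written, it is missing.

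For comparison, the paper resolves exactly this point with no fusion at all. Keeping the witnesses $\sg_{m,k}$ for pairs $m<k$, it fixes the \emph{later} index $k$ and splits each witness as $\seg^-_{m,k}\cup\seg^+_{m,k}$, where $\seg^-_{m,k}$ is the maximal initial segment disjoint from $\supp(y_k)$; blockness (your observation that $\chi$ respects $\sqsubset$) gives $\|P_{\seg^-_{m,k}}(y_m)\|\geq\theta$ and $\min(\seg^+_{m,k})\in\supp(y_k)$. Distinct initial parts force pairwise incomparable final parts, all $\theta$-heavy for the \emph{same} vector $y_k$; this is where the width bound legitimately enters, and it yields at most $D=\lceil C^q/\theta^q\rceil$ distinct sets $\seg^-_{m,k}$ for each $k$. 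Compactness of $2^{\bt}$ then allows $k\to\infty$ along a subsequence, producing $D$ limit initial segments, and a pigeonhole over these $D$ limits (using that heaviness of $y_m$ is decided by the finite set $\supp(y_m)$, so it survives the limit) gives one initial segment that is $\theta$-heavy for infinitely many $y_m$; extending it to a branch $\tau$ contradicts $\|P_\tau(y_n)\|\to 0$. If you replace your fusion paragraph by this per-$k$ counting plus compactness argument, the rest of your proposal (the translation, the Ramsey dichotomy, and the final diagonalisation over $\theta=2^{-k}$) can stand as written.
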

Granting Claim \ref{5c22} the proof of Lemma \ref{5l21} is completed. Indeed,
by repeated applications of Claim \ref{5c22}, it is possible to find a sequence
$(L_k)$ of infinite subsets of $\nn$ such that for every $k\in\nn$ the following
are satisfied.
\begin{enumerate}
\item[(a)] $\min L_k<\min L_{k+1}$.
\item[(b)] $L_{k+1}\subseteq L_k$.
\item[(c)] $|\{n\in L_k:\|P_\sg(y_n)\|\geq 2^{-k}\}|\leq 1$ for every $\sg\in\nn^\nn$.
\end{enumerate}
Introduce the sequence $(w_k)$ in $Z_{p,q}$ defined by $w_k=y_{\min L_k}$ for every
$k\in\nn$. By (a) above, we see that $(w_k)$ is a subsequence of $(y_n)$ while,
by (b) and (c), the sequence $(w_k)$ is asymptotically sparse.

It remains to prove Claim \ref{5c22}. We will argue by contradiction.
So, assume that there exist $\theta>0$ and $M\in[\nn]$ such that for every $L\in [M]$
there exist $m,k\in L$ with $m<k$ and $\sg\in\nn^\nn$ such that $\|P_\sg(y_m)\|\geq\theta$
and $\|P_\sg(y_k)\|\geq\theta$. Therefore, applying the classical Ramsey Theorem
\cite{Ra} and by passing to a subsequence of $(y_n)$, we may assume that for every
$m,k\in\nn$ with $m<k$ there exists $\sg_{m,k}\in\nn^\nn$ such that
$\|P_{\sg_{m,k}}(y_m)\|\geq\theta$ and $\|P_{\sg_{m,k}}(y_k)\|\geq\theta$.

Fix $k\in\nn$ with $k\geq 2$. For every $m\in\nn$ with $m<k$ let $\seg^-_{m,k}$ be the
maximal initial segment of $\sg_{m,k}$ which is disjoint from $\supp(y_k)$. As the sequence
$(y_n)$ is block, we see that $\|P_{\seg^-_{m,k}}(y_m)\|\geq\theta$. Let
$\seg^+_{m,k}=\sg_{m,k}\setminus \seg^-_{m,k}$ and notice that $\seg^+_{m,k}$
is a final segment of $\sg_{m,k}$ and that $\|P_{\seg^+_{m,k}}(y_k)\|\geq\theta$.
Moreover, $\min(\seg^+_{m,k})\in\supp(y_k)$. For every $r>0$ let
$\lceil r \rceil$ be the least $k\in\nn$ such that $r\leq k$. Now we observe that
\begin{equation} \label{5e20}
|\{\seg^-_{m,k}:m<k\}|\leq \lceil C^q/\theta^q\rceil.
\end{equation}
Indeed, let $\seg_1,...,\seg_d$ be an enumeration of the set $\{\seg^-_{m,k}:m<k\}$.
Then for every $i\in\{1,...,d\}$ there exists $m_i<k$ such that $\seg_i=\seg^-_{m_i,k}$.
Since the segments $(\seg^-_{m_i,k})_{i=1}^d$ are mutually different, the final segments
$(\seg^+_{m_i,k})_{i=1}^d$ are pairwise incomparable. To see this assume, towards a
contradiction, that there exist $i,j\in\{1,...,d\}$ such that
$\min(\seg^+_{m_i,k})$ is a proper initial segment of $\min(\seg^+_{m_j,k})$.
As $\min(\seg^+_{m_i,k})\in\supp(y_k)$ we get that
$\min(\seg^+_{m_i,k})\in\supp(y_k)\cap \seg^-_{m_j,k}$ contradicting
the fact that $\seg^-_{m_j,k}$ is disjoint from $\supp(y_k)$. Therefore,
the final segments $(\seg^+_{m_i,k})_{i=1}^d$ are pairwise incomparable,
and so, $C\geq \|y_k\|\geq \theta\cdot d^{1/q}$ which gives the desired estimate.

Set $D=\lceil C^q/\theta^q\rceil$. By the previous discussion, for every $k\in\nn$ with
$k\geq 2$ there exists a family $\{\seg_{i,k}:i=1,...,D\}$ of initial segments of $\bt$
such that for every $m\in\nn$ with $m<k$ there exists $i\in\{1,...,D\}$ such that
$\|P_{\seg_{i,k}}(y_m)\|\geq\theta$. The space $2^{\bt}$ is compact. Therefore,
by passing to subsequences, we may find a family $\{\seg_1,...,\seg_D\}$ of initial
segments of $\bt$ such that $\seg_{i,k}\to\seg_i$ in $2^{\bt}$ for every $i\in\{1,...,D\}$.

Let $m,k\in\nn$ with $m<k$ and $i\in \{1,...,D\}$. Let us say that $k$ is $i$-\textit{good
for} $m$ if $\|P_{\seg_{i,k}}(y_m)\|\geq\theta$. Notice that for every $m\in\nn$ there exists
$i\in\{1,...,D\}$ such that the set $H^i_m=\{k>m:k \text{ is } i-\text{good for } m\}$ is
infinite. Hence, there exist $j\in\{1,...,D\}$ and $N\in[\nn]$ such that $H^j_m$ is infinite
for every $m\in N$. We select $\tau\in\nn^\nn$ such that $\seg_{j}$ is an initial
segment of $\tau$. Since $\seg_{j,k}\to\seg_{j}$ in $2^{\bt}$ and
$\|P_{\seg_{j,k}}(y_m)\|\geq\theta$ for every $m\in N$ and every $k\in H^j_m$, we get that
\[ \limsup_{m\in N} \|P_\tau(y_m)\|\geq \limsup_{m\in N} \|P_{\seg_j}(y_m)\|=
\limsup_{m\in N} \lim_{k} \|P_{\seg_{j,k}}(y_m)\|\geq\theta.\]
This is clearly a contradiction. The proof of Lemma \ref{5l21} is completed.
\end{proof}


\section{Proof of Theorem \ref{1t4}}

Let $1\leq p<+\infty$. We set
\begin{equation} \label{6e21}
q=2p
\end{equation}
and we define $Y_p$ to be the space $Z_{p,q}$. By \S 5.2, the space $Y_p$
has a normalized $1$-unconditional basis $(z_{t_n})$. Let $(e_n)$ be the
standard unit vector basis of $\ell_p$. By $\irm:\ell_p\to Y_p$ we shall
denote the unique norm-one operator satisfying
\begin{equation} \label{6e22}
\irm(e_n)=z_{t_n}
\end{equation}
for every $n\in\nn$. We proceed to show that the space $Y_p$ is the desired one.

\subsection{The set $\sss\sss(\ell_p,Y_p)$ is a complete co-analytic subset
of $\llll(\ell_p,Y_p)$}

As we have already mentioned in the introduction, the set $\sss\sss(X,Y)$
is a co-analytic subset of $\llll(X,Y)$ for every pair $X$ and $Y$ of separable
Banach space. Hence, what remains is to show that the set $\sss\sss(\ell_p,Y_p)$
is actually complete. By Fact \ref{2f5}, it is enough to find a Borel
map $H:\tr\to\llll(\ell_p,Y_p)$ such that for every $S\in\tr$ we have
\[ S\in\wf \Leftrightarrow H(S)\in\sss\sss(\ell_p,Y_p).\]
To this end, let $S\in\tr$ be arbitrary. Let $Z^S_{p,q}$ be the subspace of
$Y_p$ defined in (\ref{5e16}) and $P_S:Y_p\to Z^S_{p,q}$ be the natural
norm-one projection. We define
\begin{equation} \label{6e23}
H(S)= P_S \circ \irm\in\llll(\ell_p,Y_p).
\end{equation}
Notice that $\|H(S)\|=1$.
\begin{claim} \label{6c23}
The map $H:\tr\to\llll(\ell_p,Y_p)$ is continuous when $\llll(\ell_p,Y_p)$
is equipped with the strong operator topology.
\end{claim}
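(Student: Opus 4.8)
The plan is to verify continuity sequentially. Since $\tr$ is a closed subset of the compact metrizable space $2^{\bt}$, it is itself metrizable, so it suffices to prove that whenever $S_k\to S$ in $\tr$ one has $H(S_k)\to H(S)$ in the strong operator topology. By the very definition of that topology, this amounts to showing that $\|H(S_k)(x)-H(S)(x)\|_{Z_{p,q}}\to 0$ for each fixed $x\in\ell_p$.

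The first step is to make the action of these operators on the basis explicit. Since $(z_{t_n})$ is a $1$-unconditional basis of $Y_p=Z_{p,q}$, the projection $P_S$ is simply the coordinate projection onto $\{z_t:t\in S\}$. Thus, for $x=\sum_n a_n e_n\in\ell_p$, continuity of $\irm$ gives $\irm(x)=\sum_n a_n z_{t_n}$ (convergent in $Z_{p,q}$), whence
\[ H(S)(x)=P_S\big(\irm(x)\big)=\sum_{\{n:\, t_n\in S\}} a_n z_{t_n}. \]
Consequently, writing $S_k\triangle S$ for the symmetric difference, the relevant difference is
\[ H(S_k)(x)-H(S)(x)=\sum_n a_n\big(\mathbf{1}_{S_k}(t_n)-\mathbf{1}_{S}(t_n)\big)z_{t_n}, \]
a vector whose $n$-th coordinate equals $a_n$ times an integer in $\{-1,0,1\}$ and which is supported exactly on those $n$ with $t_n\in S_k\triangle S$.

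The second step is the estimate. Fix $x$ and $\ee>0$. Since $\irm(x)=\sum_n a_n z_{t_n}$ converges in $Z_{p,q}$, choose $N\in\nn$ with $\big\|\sum_{n>N} a_n z_{t_n}\big\|<\ee$. Because $S_k\to S$ in $2^{\bt}$ means $\mathbf{1}_{S_k}(t)\to\mathbf{1}_S(t)$ for every node $t$, and $t_1,\dots,t_N$ are only finitely many nodes, there is $K\in\nn$ such that for all $k\geq K$ one has $t_n\in S_k\Leftrightarrow t_n\in S$ for every $n\leq N$; equivalently, $S_k\triangle S\subseteq\{t_n:n>N\}$ for $k\geq K$. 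For such $k$ the difference above is supported on indices $n>N$, with coefficients dominated in modulus by $|a_n|$, so by $1$-unconditionality of $(z_{t_n})$ its norm is at most $\big\|\sum_{n>N} a_n z_{t_n}\big\|<\ee$. This yields $\|H(S_k)(x)-H(S)(x)\|_{Z_{p,q}}\to 0$, as required.

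There is no essential obstacle here beyond bookkeeping; the only point requiring care is the interplay between the two relevant topologies. One must observe that convergence $S_k\to S$ in the product topology on $2^{\bt}$ forces the symmetric difference $S_k\triangle S$ to avoid the initial block $\{t_1,\dots,t_N\}$ once $k$ is large, and then combine this with $1$-unconditionality so that, \emph{uniformly in} $k$, the difference $H(S_k)(x)-H(S)(x)$ is controlled by a single fixed tail of the vector $\irm(x)$. (Alternatively, the same conclusion follows from the elementary bound $\|z\|_{Z_{p,q}}\leq\big(\sum_{t}|z^*_t(z)|^p\big)^{1/p}$, valid because pairwise incomparable segments are disjoint and $q\geq p$, together with the tail smallness of $x$ in $\ell_p$.)
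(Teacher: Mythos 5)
Your proof is correct and takes essentially the same approach as the paper's: reduce to sequential continuity, observe that $S_k\to S$ in $2^{\bt}$ forces membership of any finite set of nodes to stabilize, and then bound $\|H(S_k)(x)-H(S)(x)\|$ by a small tail of $\irm(x)$. The only difference is one of detail: you make explicit (via $1$-unconditionality of $(z_{t_n})$, or alternatively the $\ell_p$-coordinate bound) the final estimate that the paper's proof leaves to the reader.
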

\begin{proof}
Let $(S_n)$ be a sequence in $\tr$ and $S\in\tr$ such that $S_n\to S$.
Notice that for every $s\in\bt$ we have $s\in S$ if and only if $s\in S_n$
for all $n\in\nn$ large enough. Let $x\in \ell_p$ be arbitrary and set
$y=\irm(x)$. It follows from the above remarks that for every $r>0$
there exists $k\in\nn$ such that $\|P_S(y)-P_{S_n}(y)\|\leq r$ for every
$n\in\nn$ with $n\geq k$ and the result follows.
\end{proof}
\begin{claim} \label{6c24} Let $S\in\tr$. Then $S\in\wf$ if and only if
$H(S)\in\sss\sss(\ell_p,Y_p)$.
\end{claim}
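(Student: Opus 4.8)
The plan is to prove the two implications of Claim \ref{6c24} separately: the non-well-founded case will produce an explicit $\ell_p$-copy on which $H(S)$ is an isometry, while the well-founded case will combine Fact \ref{5f19} with the classical non-embedding of $\ell_q$ into $\ell_p$.

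First I would treat the implication ``$H(S)\in\sss\sss(\ell_p,Y_p)\Rightarrow S\in\wf$'' by contraposition. Assume $S\notin\wf$ and fix a branch $\sg\in[S]$, so that $\sg|n\in S$ for every $n\geq 0$. Consider the closed subspace $V=\overline{\sspan}\{e_{\chi(\sg|n)}:n\geq 1\}$ of $\ell_p$, which is isometric to $\ell_p$ since it is spanned by a subset of the unit vector basis. Since $\irm(e_{\chi(\sg|n)})=z_{\sg|n}$, every vector in $\irm(V)$ is supported on the single chain $\{\sg|n:n\geq 1\}$. The key structural observation is that two segments of $\bt$ are incomparable only when their $\sqsubseteq$-minimal nodes are incomparable; hence a single chain can meet at most one member of any family of pairwise incomparable segments. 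Feeding this into the definition (\ref{5e15}) together with formula (\ref{5e18}), the norm of a vector supported on $\sg$ is exactly the $\ell_p$-norm of its coefficients, so $\irm|_V$ is an isometry onto $Z^\sg_{p,q}$. Because $\sg|n\in S$ for all $n$, the projection $P_S$ restricts to the identity on $Z^\sg_{p,q}$, whence $H(S)|_V=P_S\circ\irm|_V=\irm|_V$ is an isometric embedding on the infinite-dimensional space $V$. Thus $H(S)$ is not strictly singular.

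Next I would prove the converse ``$S\in\wf\Rightarrow H(S)\in\sss\sss(\ell_p,Y_p)$''. By Fact \ref{5f19} the space $Z^S_{p,q}$ is either finite-dimensional or hereditarily $\ell_q$. In the finite-dimensional case $H(S)=P_S\circ\irm$ is a finite-rank operator, hence strictly singular. In the hereditarily $\ell_q$ case I argue by contradiction: if $H(S)$ were bounded below on some infinite-dimensional $W\subseteq\ell_p$, then, using that every infinite-dimensional subspace of $\ell_p$ contains a normalized sequence which is a small perturbation of a normalized block sequence and is therefore equivalent to the unit vector basis of $\ell_p$, I would extract $V\subseteq W$ with $V\cong\ell_p$. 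Then $H(S)(V)\cong\ell_p$ would be an infinite-dimensional subspace of the hereditarily $\ell_q$ space $Z^S_{p,q}$, and so it would contain an isomorphic copy of $\ell_q$. This forces $\ell_q$ to embed into $\ell_p$, which is impossible for $q=2p\neq p$; hence $H(S)$ is strictly singular.

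The main obstacle is the isometry computation in the second paragraph, namely that a vector supported on a single branch of $\bt$ has $Z_{p,q}$-norm equal to its $\ell_p$-norm; this rests entirely on the incomparability criterion for segments recalled in \S 5.1 and must be stated carefully. On the converse implication the delicate point is to ensure that the passage from $W$ to an $\ell_p$-isomorphic block subspace $V$ is valid uniformly for all $1\leq p<+\infty$ (in particular for $p=1$), after which the classical fact that $\ell_q$ does not embed into $\ell_p$ for $p\neq q$ closes the argument.
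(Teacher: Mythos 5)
Your proposal is correct and follows essentially the same route as the paper: for $S\notin\wf$ you build the same subspace of $\ell_p$ spanned by the unit vectors indexed along a branch of $S$ and verify that $H(S)$ acts isometrically on it (your incomparability argument is exactly why $Z^\sg_{p,q}$ is isometric to $\ell_p$, as noted in \S 5.2), and for $S\in\wf$ you invoke Fact \ref{5f19} and the non-embeddability of $\ell_q$ into $\ell_p$, which is precisely what the paper's terse ``since $p\neq q$'' step amounts to. The only difference is that you spell out the details the paper leaves implicit.
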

\begin{proof}
First assume that $S\in\wf$. Notice that the operator $H(S)$ maps $\ell_p$
onto $Z^S_{p,q}$. By Fact \ref{5f19}, the space $Z^S_{p,q}$ is either
finite-dimensional or hereditarily $\ell_q$. Since $p\neq q$, the operator
$H(S)$ is strictly singular.

Now assume that $S\notin \wf$ and let $\sg\in [S]$. Let $\chi:\bt\to\nn$
be the bijection described in \S 5.1 and for every $k\in\nn$ set
$n_k=\chi\big(\sg(k)\big)$. By the properties of $\chi$, we see that
$n_k<n_{k+1}$ for every $k\in\nn$. Let $E$ be the subspace of $\ell_p$
spanned by the subsequence $(e_{n_k})$ of the basis $(e_n)$. We claim
that the operator $H(S)$ restricted on $E$ is an isometric embedding.
Indeed, let $d\in\nn$ and $a_1,...,a_d\in\rr$ and notice that
\begin{eqnarray*}
\big\| \sum_{k=1}^d a_k e_{n_k} \big\|_{\ell_p} & = & \Big( \sum_{k=1}^d |a_k|^p\Big)^{1/p}
= \big\| \sum_{k=1}^d a_k z_{\sg(k)} \big\|_{Y_p} \\
& = & \big\| (P_S\circ \irm)\Big(\sum_{k=1}^d a_k e_{n_k}\Big)\big\|_{Y_p}.
\end{eqnarray*}
The claim is proved.
\end{proof}
By Fact \ref{2f5}, Claim \ref{6c23} and Claim \ref{6c24}, we conclude that
$\sss\sss(\ell_p,Y_p)$ is a complete co-analytic subset of $\llll(\ell_p,Y_p)$.

\subsection{For every $T\in\sss\sss(\ell_p,Y_p)$ we have $\varrho(T)\leq 2$}

Let us fix a strictly singular operator $T:\ell_p\to Y_p$. We need to prove that
$\varrho(T)\leq 2$. To this end, we may assume that
\begin{enumerate}
\item[(P1)] $\|T\|=1$.
\end{enumerate}
By Lemma \ref{3l12}, it is enough to show that for every pair $(x_n)$ and
$(y_n)$ of $T$-compatible sequences (with respect to the bases $(e_n)$ and
$(z_{t_n})$ of $\ell_p$ and $Y_p$  respectively) and for every $\delta>0$
there exist a non-empty set $F\in\sss_2$ and reals $(a_n)_{n\in F}$ such that
\[ \big\| \sum_{n\in F} a_n x_n\big\|_{\ell_p}=1 \ \text{ and } \
\big\| \sum_{n\in F} a_n y_n\big\|_{Y_p}\leq\delta.\]
So, fix a pair $(x_n)$ and $(y_n)$ of $T$-compatible sequences
and $\delta>0$. By Definition \ref{3d11} and (P1) above, we see that
the following are satisfied.
\begin{enumerate}
\item[(P2)] The sequence $(x_n)$ is $1$-equivalent to the standard unit
vector basis of $\ell_p$.
\item[(P3)] The sequence $(y_n)$ is block and satisfies $\|y_n\|\leq 2$
for every $n\in\nn$.
\end{enumerate}
We are going to refine the sequences $(x_n)$ and $(y_n)$ in order to achieve
further properties. Observe that we are allowed to do so since the family
$\sss_2$ is spreading. First we notice that, by Definition \ref{3d11}
and by passing to common subsequences of $(x_n)$ and $(y_n)$ if necessary,
we may find a constant $\Theta\geq 1$ such that
\begin{enumerate}
\item[(P4)] the sequences $(y_n)$ and $\big(T(x_n)\big)$ are
$\Theta$-equivalent.
\end{enumerate}
Now we make the following simple (but crucial) observation.
\begin{lem} \label{6l25}
For every $\sg\in\nn^\nn$ we have $\|P_\sg(y_n)\|\to 0$.
\end{lem}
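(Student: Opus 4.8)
The plan is to argue by contradiction, exploiting the fact that each branch subspace $Z^{\sg}_{p,q}$ is isometric to $\ell_p$ together with the strict singularity of $T$. Suppose the conclusion fails. Then there exist $\sg\in\bs$ and a real $\ee_0>0$ such that, after passing to a subsequence of $(x_n)$ and $(y_n)$ (which leaves properties (P2), (P3) and the estimate $\|T(x_n)-y_n\|\le 2^{-n}$ intact), we may assume $\|P_\sg(y_n)\|\ge\ee_0$ for every $n\in\nn$. The goal is then to manufacture an infinite-dimensional subspace of $\ell_p$ on which $T$ is bounded below, contradicting strict singularity.

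First I would analyze the vectors $v_n:=P_\sg(y_n)$, all of which lie in the branch subspace $Z^{\sg}_{p,q}$. Since $(y_n)$ is a block sequence of $(z_{t_n})$ by (P3), the supports $\supp(y_n)$ are pairwise disjoint; hence the sets $\sg\cap\supp(y_n)$ are pairwise disjoint, so the vectors $(v_n)$ are \emph{disjointly supported} in $Z^{\sg}_{p,q}$. Because $Z^{\sg}_{p,q}$ is isometric to $\ell_p$ (see \S 5.2), formula (\ref{5e18}) yields $\big\|\sum_n a_n v_n\big\|^p=\sum_n |a_n|^p\,\|v_n\|^p$ for all finitely supported scalars $(a_n)$. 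Combined with $\ee_0\le\|v_n\|\le\|y_n\|\le 2$, this shows that $(v_n)$ is a seminormalized block basis inside a copy of $\ell_p$, and in particular $\big\|\sum_n a_n v_n\big\|\ge\ee_0\big(\sum_n|a_n|^p\big)^{1/p}$. This disjoint-support observation is the conceptual heart of the argument.

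Next I would transfer this lower bound to $w_n:=P_\sg(T(x_n))=(P_\sg\circ T)(x_n)$. By $T$-compatibility, $\|w_n-v_n\|=\|P_\sg(T(x_n)-y_n)\|\le\|T(x_n)-y_n\|\le 2^{-n}$. Estimating $\big\|\sum_{n\ge N}a_n(w_n-v_n)\big\|$ by Hölder's inequality (using the conjugate exponent $p'$, with the obvious modification $\sum_{n\ge N}|a_n|2^{-n}\le 2^{-N}\sum_n|a_n|$ when $p=1$), one finds that for $N$ large enough the perturbation factor drops below $\ee_0/2$, so $\big\|\sum_{n\ge N}a_n w_n\big\|\ge(\ee_0/2)\big(\sum_n|a_n|^p\big)^{1/p}$. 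The one delicate point — which I expect to be the main obstacle — is making this perturbation estimate uniform in $p$, i.e.\ treating the non-reflexive case $p=1$ (where $(x_n)$ is not weakly null) on the same footing as $1<p<\infty$; this is why I pass to a tail rather than invoking weak compactness or the theorem that strictly singular operators on $\ell_p$ are compact.

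Finally, set $E=\overline{\sspan}\{x_n:n\ge N\}$, an infinite-dimensional subspace of $\ell_p$. By (P2), $(x_n)$ is $1$-equivalent to the unit vector basis, so for $x=\sum_{n\ge N}a_n x_n$ we have $\|x\|_{\ell_p}=\big(\sum_n|a_n|^p\big)^{1/p}$; since $\|P_\sg\|=1$, the previous estimate gives
\[ \|T(x)\|\ \ge\ \|P_\sg(T(x))\|\ =\ \Big\|\sum_{n\ge N}a_n w_n\Big\|\ \ge\ \frac{\ee_0}{2}\,\|x\|_{\ell_p}. \]
Thus $T$ restricted to $E$ is an isomorphic embedding, contradicting the strict singularity of $T$. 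Everything beyond the perturbation estimate reduces to the isometric identification of the branch subspace with $\ell_p$ and the elementary fact that disjointly supported seminormalized vectors there are bounded below.
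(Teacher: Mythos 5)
Your proof is correct and follows essentially the same route as the paper's: assume towards a contradiction that some subsequence satisfies $\|P_\sg(y_n)\|\geq \ee_0$, use the disjointness of the sets $\sg\cap\supp(y_n)$ (coming from $(y_n)$ being block) together with the isometric identification of $Z^\sg_{p,q}$ with $\ell_p$ to obtain the lower $\ell_p$-estimate for $P_\sg$ applied to linear combinations, and then transfer this bound to $T$ to contradict strict singularity. The only (immaterial) difference is in the transfer step: the paper invokes the $\Theta$-equivalence (P4) of $(y_n)$ and $\big(T(x_n)\big)$, whereas you re-derive the perturbation inline from $\|T(x_n)-y_n\|\leq 2^{-n}$ via H\"{o}lder's inequality and passing to a tail, which works equally well.
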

\begin{proof}
Assume, towards a contradiction, that there exist $\sg\in\nn^\nn$,
a constant $\theta>0$ and $L=\{l_1<l_2<...\}\in[\nn]$ such that
$\|P_\sg(y_{l_n})\|\geq\theta$ for every $n\in\nn$. Since the sequence
$(y_n)$ is block and $\|P_\sg\|=1$, this implies that for every $d\in\nn$
and every $a_1,...,a_d\in\rr$ we have
\begin{equation} \label{6e24}
\theta \Big( \sum_{n=1}^d |a_n|^p\Big)^{1/p} \leq
\big\| P_\sg\Big(\sum_{n=1}^d a_n y_{l_n}\Big)\big\|_{Y_p} \leq
\big\| \sum_{n=1}^d a_n y_{l_n}\big\|_{Y_p}.
\end{equation}
Let $E$ be the subspace of $\ell_p$ spanned by the subsequence $(x_{l_n})$ of $(x_n)$.
We claim that the operator $T$ restricted on $E$ is an isomorphic embedding.
Indeed, let $d\in\nn$ and $a_1,...,a_d\in\rr$ and notice that
\begin{eqnarray*}
\theta \big\|\sum_{n=1}^d a_n x_{l_n}\big\|_{\ell_p}
& \stackrel{\text{(P2)}}{=} & \theta \Big( \sum_{n=1}^d |a_n|^p\Big)^{1/p}
\stackrel{\text{(\ref{6e24})}}{\leq} \big\| \sum_{n=1}^d a_n y_{l_n}\big\|_{Y_p} \\
& \stackrel{\text{(P4)}}{\leq} & \Theta \big\|\sum_{n=1}^d a_n T(x_{l_n})\big\|_{Y_p}
=  \Theta \big\|T\Big(\sum_{n=1}^d a_n x_{l_n}\Big)\big\|_{Y_p}.
\end{eqnarray*}
Therefore, for every $x\in E$ with $\|x\|=1$ we have
$\|T(x)\|\geq \theta\cdot \Theta^{-1}$. This is clearly a contradiction
and the proof is completed.
\end{proof}
By (P3) above, Lemma \ref{6l25} and Lemma \ref{5l21} and by passing
to further common subsequences of $(x_n)$ and $(y_n)$, we may additionally
assume that
\begin{enumerate}
\item[(P5)] the sequence $(y_n)$ is asymptotically sparse.
\end{enumerate}
We fix $N\in\nn$ with $N\geq 2$ and such that
\begin{equation} \label{6e25}
N^{1/q-1/p} \leq \delta \cdot (2\Theta)^{-1}.
\end{equation}
Such a natural number can be found since $q=2p$ and $p\geq 1$.
Recursively, for every $i\in\{1,...,N\}$ we will select
\begin{enumerate}
\item[(a)] a natural number $k_i$,
\item[(b)] a positive real $\ee_i$ and
\item[(c)] a non-empty finite subset $F_i$ of $\nn$
\end{enumerate}
such that, setting $\mu_1=1$ and
\begin{equation} \label{6e26}
\mu_i=\sum_{m=1}^{i-1} \Big( \sum_{n\in F_m} |\supp(y_n)|\Big)^{1/q}
\end{equation}
for every $i\in\{2,...,N\}$, the following conditions are satisfied.
\begin{enumerate}
\item[(C1)] $F_1<...<F_N$ and $N\leq \min F_1$.
\item[(C2)] $|F_i|=k_i$ and $k_i\leq \min F_i$ for every $i\in\{1,...,N\}$.
\item[(C3)] $|\{n\in F_i: \|P_\sg(y_n)\|\geq\ee_i\}|\leq 1$ for every
$\sg\in\nn^\nn$ and every $i\in\{1,...,N\}$.
\item[(C4)] $(k_i\ee_i+2)\cdot k_i^{-1/p}\leq \mu_i^{-1}\cdot 2^{-i}$ for every
$i\in\{1,...,N\}$.
\end{enumerate}
We proceed to the recursive selection. As the first step is identical to the
general one, we may assume that for some $i\in\{1,...,N-1\}$ the natural numbers
$k_1,...,k_i$, the positive reals $\ee_1,...,\ee_i$ and the sets
$F_1,...,F_i$ have been selected so that conditions (C1)-(C4)
are satisfied. In particular, the number $\mu_{i+1}$ can be defined
(for the first step of the recursive selection, recall that we have
already set $\mu_1=1$). First we select $k_{i+1}\in\nn$ such that $k_{i+1}\geq N$ and
\[ 2\cdot k_{i+1}^{-1/p}\leq 2^{-1}\cdot \mu_{i+1}^{-1}\cdot 2^{-(i+1)}. \]
Next we select $\ee_{i+1}>0$ such that
\[ k_{i+1}^{1-1/p}\cdot\ee_{i+1}\leq 2^{-1}\cdot \mu_{i+1}^{-1}\cdot 2^{-(i+1)} \]
and we notice that with these choices condition (C4) is satisfied. By
(P5), the sequence $(y_n)$ is asymptotically sparse. Therefore,
it is possible to find $l\in\nn$ such that $|\{n\geq l: \|P_\sg(y_n)\|\geq\ee_{i+1}\}|\leq 1$
for every $\sg\in\nn^\nn$. We select a non-empty finite subset $F_{i+1}$
of $\nn$ such that $F_i<F_{i+1}$, $|F_{i+1}|=k_{i+1}$ and
$\min F_{i+1}\geq \max\{k_{i+1},l\}$ and we observe that with these
choices conditions (C1), (C2) and (C3) are satisfied. The recursive selection
is completed.

We define
\begin{equation} \label{6e27}
F=F_1\cup ...\cup F_N
\end{equation}
Notice that for every $n\in F$ there exists a unique $i(n)\in\{1,...,N\}$
such that $n\in F_{i(n)}$. For every $n\in F$ we define
\begin{equation} \label{6e28}
a_n= N^{-1/p} \cdot k_{i(n)}^{-1/p}.
\end{equation}
We will show that the set $F$ and the reals $(a_n)_{n\in F}$ are as desired.
\begin{claim} \label{6c26}
We have $F\in\sss_2$.
\end{claim}
\begin{proof}
Follows immediately by (C1) and (C2).
\end{proof}
\begin{claim} \label{6c27}
We have
\[ \big\| \sum_{n\in F} a_n x_n \big\|_{\ell_p}=1. \]
\end{claim}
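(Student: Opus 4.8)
The plan is to reduce the statement to a direct computation of the $\ell_p$-norm of the coefficient vector. First I would invoke property (P2): since the block sequence $(x_n)$ is $1$-equivalent to the standard unit vector basis of $\ell_p$, the norm in question equals the $\ell_p$-norm of the scalars $(a_n)_{n\in F}$, so it suffices to show that $\sum_{n\in F}|a_n|^p=1$.

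Next I would exploit the partition $F=F_1\cup\cdots\cup F_N$, which is a disjoint union since by (C1) the sets $F_1,\dots,F_N$ are successive, hence pairwise disjoint. This lets me split the sum as $\sum_{n\in F}|a_n|^p=\sum_{i=1}^N\sum_{n\in F_i}|a_n|^p$. On each block $F_i$ the coefficient $a_n$ is constant: by the definition (\ref{6e28}) we have $a_n=N^{-1/p}k_{i(n)}^{-1/p}=N^{-1/p}k_i^{-1/p}$ for every $n\in F_i$, because $i(n)=i$ there. Therefore $\sum_{n\in F_i}|a_n|^p=|F_i|\cdot N^{-1}k_i^{-1}$, and by (C2) we have $|F_i|=k_i$, so this contribution is exactly $N^{-1}$.

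Summing over $i\in\{1,\dots,N\}$ gives $\sum_{n\in F}|a_n|^p=N\cdot N^{-1}=1$, and taking $p$-th roots yields the claim. I do not expect any genuine obstacle here, as the argument is entirely arithmetic; the only points requiring care are that the exponents $-1/p$ in the definition of $a_n$ are tailored precisely so that the factor $k_i$ cancels against $|F_i|=k_i$, and that the normalizing factor $N^{-1/p}$ balances the $N$ blocks. The purpose of this lemma is merely to certify that $\sum_{n\in F}a_nx_n$ is a norm-one vector, so that it is an admissible witness for the criterion of Lemma \ref{3l12}; the substantive work lies in the companion estimate bounding $\big\|\sum_{n\in F}a_ny_n\big\|_{Y_p}$ by $\delta$, where the asymptotic sparseness (P5) together with conditions (C3) and (C4) will be used.
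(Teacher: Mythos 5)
Your proposal is correct and follows exactly the paper's own argument: invoke (P2) to reduce to the $\ell_p$-norm of the coefficients, split the sum over the disjoint blocks $F_i$, and use $|F_i|=k_i$ from (C2) so that each block contributes $N^{-1}$. Nothing is missing.
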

\begin{proof}
By (P2), the sequence $(x_n)$ is $1$-equivalent to the standard unit vector basis
of $\ell_p$. Therefore,
\begin{eqnarray*}
\big\| \sum_{n\in F} a_n x_n \big\|_{\ell_p} & = &
N^{-1/p} \big\| \sum_{i=1}^N \sum_{n\in F_i} k_i^{-1/p} x_n \big\|_{\ell_p} \\
& = &  N^{-1/p} \Big( \sum_{i=1}^N \sum_{n\in F_i} k_i^{-1} \Big)^{1/p}=
N^{-1/p} \Big( \sum_{i=1}^N |F_i| \cdot k_i^{-1} \Big)^{1/p} \\
& \stackrel{\mathrm{(C2)}}{=} & N^{-1/p} \cdot N^{1/p} =1.
\end{eqnarray*}
The claim is proved.
\end{proof}
The final claim is the following.
\begin{claim} \label{6c28}
We have
\[ \big\| \sum_{n\in F} a_n y_n \big\|_{Y_p}\leq\delta. \]
\end{claim}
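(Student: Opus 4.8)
The plan is to unwind the definition of the $Z_{p,q}$-norm and reduce to a single, but arbitrary, finite family of pairwise incomparable nonempty segments. Writing $z=\sum_{n\in F}a_ny_n$, it suffices by \eqref{5e15} to fix such a family $(\seg_j)_{j=1}^d$ and prove $\sum_{j=1}^d\|P_{\seg_j}(z)\|^q\le\delta^q$. Since $(y_n)$ is a block sequence its supports are disjoint, so $\|P_{\seg_j}(z)\|^p=\sum_{n\in F}a_n^p\|P_{\seg_j}(y_n)\|^p$ for every $j$; I would also fix, for each $j$, a branch $\sg_j$ of $\bt$ with $\seg_j\subseteq\sg_j$, so that $\|P_{\seg_j}(y_n)\|\le\|P_{\sg_j}(y_n)\|$. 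The $\ell_p$-triangle inequality along the chain $\seg_j$ then gives $\|P_{\seg_j}(z)\|\le\sum_{n\in F}a_n\|P_{\seg_j}(y_n)\|$, which I would regroup by blocks.

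The key structural input is the sparseness recorded in (C3): for each block $F_i$ and each branch $\sg_j$ at most one index $n\in F_i$ satisfies $\|P_{\sg_j}(y_n)\|\ge\ee_i$. Accordingly I would split, within each block, the block-$i$ contribution into a \emph{large} part (the at most one index with projection $\ge\ee_i$) and a \emph{small} part (projections $<\ee_i$), writing $\|P_{\seg_j}(z)\|\le L_j+S_j$ where $L_j$ and $S_j$ collect the large and small contributions over all blocks. The two pieces are estimated separately and recombined through Minkowski's inequality in $\ell_q$ over $j$: $\big(\sum_j\|P_{\seg_j}(z)\|^q\big)^{1/q}\le\big(\sum_jL_j^q\big)^{1/q}+\big(\sum_jS_j^q\big)^{1/q}$.

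For the small part the common coefficient on $F_i$ is $a_n=N^{-1/p}k_i^{-1/p}$ and there are $|F_i|=k_i$ summands each of norm $<\ee_i$, so the block-$i$ contribution to $S_j$ is at most $N^{-1/p}k_i^{1-1/p}\ee_i$, which by the inequality packaged in (C4) is at most $N^{-1/p}\mu_i^{-1}2^{-i}$; since $\mu_i\ge 1$, summing the geometric series gives $S_j<N^{-1/p}$ for \emph{every} segment. For the large part I would group the at most one large index per block and invoke the segment-counting bound $\sum_j\|P_{\seg_j}(y_n)\|^q\le\|y_n\|^q\le 2^q$, valid precisely because the $\seg_j$ are pairwise incomparable (apply \eqref{5e15} to the single vector $y_n$); combined with Minkowski in $\ell_q$, the largeness of the $k_i$ and the overall factor $N^{-1/p}$, this is where the gap between $p$ and $q$ enters and where the calibration \eqref{6e25}, namely $N^{1/q-1/p}\le\delta(2\Theta)^{-1}$, is consumed.

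The hard part will be the passage from these per-segment estimates to the sum over the a priori \emph{unboundedly many} segments: the bound $S_j<N^{-1/p}$ cannot simply be summed over $j$, since $d$ is not controlled. Here I would exploit that pairwise incomparable segments are pairwise disjoint as subsets of $\bt$, so the number of segments meeting the support of block $i$ is at most $\sum_{n\in F_i}|\supp(y_n)|$, a quantity controlled through $\big(\sum_{n\in F_i}|\supp(y_n)|\big)^{1/q}\le\mu_{i+1}$. This is exactly the role of the support-size budget $\mu_i$ and of the two-sided calibration in (C4): it ties the number of segments that can be ``active'' on each block to the $\mu_i$, against which $\ee_i$ is taken small and $k_i$ large, so that after summing over blocks and segments the total collapses into the convergent series $\sum_i2^{-i}$ and the final bound is $\le\delta$.
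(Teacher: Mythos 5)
There is a genuine gap, and it sits exactly where you flagged the difficulty: passing from per-segment bounds to the sum over unboundedly many segments. Your proposed fix --- bound the number of segments meeting the support of block $i$ by $\sum_{n\in F_i}|\supp(y_n)|$, controlled via $\big(\sum_{n\in F_i}|\supp(y_n)|\big)^{1/q}\le\mu_{i+1}$ --- has an off-by-one index mismatch that your scheme cannot absorb. The per-segment smallness that (C3)/(C4) provide for block $i$ is $N^{-1/p}\mu_i^{-1}2^{-i}$, calibrated against $\mu_i$, which measures only the supports of blocks $1,\dots,i-1$; the count of segments meeting block $i$'s support is controlled only by $\mu_{i+1}$, which includes block $i$'s own support. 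Nothing in the construction bounds $\mu_{i+1}/\mu_i$: the sizes $|\supp(y_n)|$ for $n\in F_i$ are not constrained by $k_i$, $\ee_i$ or $\mu_i$, and can be arbitrarily large. Consequently, for the ``diagonal'' terms --- block $i$'s small contribution to the segments that meet $\supp(z_i)$ but no earlier block's support, where $z_i=k_i^{-1/p}\sum_{n\in F_i}y_n$ --- your estimate is of order $(\mu_{i+1}/\mu_i)^q\,2^{-iq}N^{-q/p}$, which is unbounded, so the argument does not close.

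The paper closes precisely this gap with an idea absent from your proposal. It partitions $\{1,\dots,d\}$ greedily into sets $\Delta_i$ (segments whose \emph{first} meeting is with $\supp(z_i)$), and for $j\in\Delta_i$ it bounds the block-$i$ contribution not by smallness-times-count but by pairwise incomparability together with a uniform norm bound on the block average: $\sum_{j\in\Delta_i}\|P_{\seg_j}(z_i)\|^q\le\|z_i\|^q\le\Theta^q$, which is Subclaim \ref{6sc29}(i) and is where the operator $T$ enters via (P1), (P2), (P4). Smallness-times-count is used only for \emph{later} blocks $l>i$ acting on $\Delta_i$, and there the indices align: $|\Delta_i|^{1/q}\le\mu_l$ because $l\ge i+1$, so block $l$'s smallness $\mu_l^{-1}2^{-l}$ exactly cancels the count. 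Your per-branch large/small split cannot substitute for this: the ``small part of $z_i$'' depends on the branch, so it is not a single vector to which incomparability can be applied, and any repair is forced to reintroduce a bound on $\|z_i\|$ used through incomparability (the paper gets $\|z_i\|\le\Theta$ from $T$; alternatively $\|z_i\|\le 2$ follows from (P3) and the upper $\ell_p$-estimate for disjointly supported vectors in $Z_{p,q}$). Two further, smaller issues in the large part: since the large index varies with $j$, the fixed-$n$ bound $\sum_j\|P_{\seg_j}(y_n)\|^q\le 2^q$ must be applied after grouping segments by their large index, giving $\big(\sum_{n\in F_i}\|y_n\|^q\big)^{1/q}\le 2k_i^{1/q}$ per block; and ``largeness of the $k_i$'' must mean the quantitative consequence $k_i^{-1/p}\le 2^{-i-1}$ of (C4), not merely $k_i\ge N$, since with only $k_i\ge N$ your Minkowski bound is $2N^{-1/p}\sum_i k_i^{1/q-1/p}\le 2N^{1-1/p-1/q}$, which diverges for every $p\ge 3/2$.
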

For the proof of Claim \ref{6c28} we need to do some preparatory work.
For every $i\in\{1,...,N\}$ we introduce the vector $z_i$ in $Y_p$ defined by
\begin{equation} \label{6e29}
z_i= k_i^{-1/p} \sum_{n\in F_i} y_n.
\end{equation}
Notice that
\begin{equation} \label{6e30}
\sum_{n\in F} a_n y_n = N^{-1/p} \sum_{i=1}^N z_i
\end{equation}
and that
\begin{equation} \label{6e31}
|\supp(z_i)|=\sum_{n\in F_i} |\supp(y_n)|
\end{equation}
for every $i\in\{1,...,N\}$.
\begin{subclaim} \label{6sc29}
Let $i\in\{1,...,N\}$. Then the following are satisfied.
\begin{enumerate}
\item[(i)] We have $\|z_i\|\leq\Theta$.
\item[(ii)] For every segment $\seg$ of $\bt$ we have
$\|P_{\seg}(z_i)\|\leq \mu_i^{-1}\cdot 2^{-i}$.
\end{enumerate}
\end{subclaim}
\begin{proof}
(i) By (P4), the sequences $(y_n)$ and $\big(T(x_n)\big)$ are $\Theta$-equivalent.
Hence,
\begin{eqnarray*}
\|z_i\| & \leq & \Theta \big\| T\Big( \sum_{n\in F_i} k_i^{-1/p} x_n\Big)\big\|_{Y_p}
\stackrel{\mathrm{(P1)}}{\leq} \Theta \big\| \sum_{n\in F_i} k_i^{-1/p} x_n\big\|_{\ell_p} \\
& \stackrel{\mathrm{(P2)}}{=} & \Theta \Big( \sum_{n\in F_i} k_i^{-1}\Big)^{1/p} =
\Theta \big( |F_i| \cdot k_i^{-1}\big)^{1/p} \stackrel{\mathrm{(C2)}}{=} \Theta.
\end{eqnarray*}
\noindent (ii) Fix a segment $\seg$ of $\bt$. Clearly we may assume that $\seg$ is
non-empty. We pick $\sg\in\nn^\nn$ such that $\seg\subseteq \sg$ and we notice that
$\|P_\seg(y)\|\leq \|P_{\sg}(y)\|$ for every vector $y$ in $Y_p$. Therefore, it is enough
to show that $\|P_{\sg}(z_i)\|\leq\mu_i^{-1}\cdot 2^{-i}$. By (P3), we see that
$\|P_\sg(y_n)\|\leq 2$ for every $n\in\nn$. Hence,
\[ \big\| P_{\sg}\Big( k_i^{-1/p} \sum_{n\in F_i} y_n\Big)\big\|
\stackrel{\mathrm{(C3)}}{\leq} \frac{(|F_i|-1)\ee_i+2}{k_i^{1/p}}
\stackrel{\mathrm{(C2)}}{=} \frac{(k_i-1)\ee_i+2}{k_i^{1/p}}
\stackrel{\mathrm{(C4)}}{\leq} \mu_i^{-1}\cdot 2^{-i} \]
and the result follows.
\end{proof}
We are ready to proceed to the proof of Claim \ref{6c28}.
\begin{proof}[Proof of Claim \ref{6c28}]
We set
\begin{equation} \label{6e32}
z=z_1+...+z_N.
\end{equation}
By the choice of $N$ in (\ref{6e25}) and equality (\ref{6e30}), it is enough
to show that
\begin{equation} \label{6e33}
\|z\|\leq N^{1/q} \cdot (2\Theta).
\end{equation}
By (\ref{5e18}), we see that for every vector $y$ in $Y_p$ we have
\[ \|y\|=\sup\Big\{ \Big( \sum_{j=1}^d \|P_{\seg_j}(y)\|^q \Big)^{1/q} \Big\} \]
where the above supremum is taken over all families of pairwise incomparable non-empty
segments of $\bt$. Therefore, it is enough to consider an arbitrary family $(\seg_j)_{j=1}^d$
of pairwise incomparable non-empty segments of $\bt$ and show that
\begin{equation} \label{6e34}
\sum_{j=1}^d \|P_{\seg_j}(z)\|^q \leq N \cdot (2\Theta)^q.
\end{equation}
So, fix such a family $(\seg_j)_{j=1}^d$. We may assume that for every $j\in\{1,...,d\}$
there exists $i\in\{1,...,N\}$ such that $\seg_j\cap \supp(z_i)\neq\varnothing$.
We define recursively a partition $(\Delta_i)_{i=1}^N$ of $\{1,...,d\}$ by the rule
$\Delta_1=\big\{ j\in\{1,...,d\}: \seg_j\cap \supp(z_1)\neq\varnothing\big\}$ and
\[ \Delta_i = \Big\{ j\in \{1,...,d\}\setminus \Big( \bigcup_{m=1}^{i-1} \Delta_m\Big):
\seg_j\cap \supp(z_i)\neq\varnothing\Big\}.\]
for every $i\in\{2,...,N\}$. The segments $(\seg_j)_{j=1}^d$ are pairwise incomparable
and a fortiori disjoint. Therefore, by equality (\ref{6e31}), for every $i\in\{1,...,N\}$
we have
\begin{equation}
\label{6e35} |\Delta_i|\leq \sum_{n\in F_i} |\supp(y_n)|.
\end{equation}

Fix $i\in\{1,...,N\}$. Let $j\in \Delta_i$ be arbitrary. Notice that if $m\in\{1,...,N\}$
with $m<i$, then $P_{\seg_j}(z_m)=0$. Therefore,
\begin{equation} \label{6e36}
\|P_{\seg_j}(z)\| = \|P_{\seg_j}(z_i +...+ z_N)\|
\leq \|P_{\seg_j}(z_i)\|+ \sum_{l=i+1}^N \|P_{\seg_j}(z_l)\|.
\end{equation}
Let $l\in\{i+1,...,N\}$ be arbitrary. By (\ref{6e26}) and (\ref{6e35}), we have
\[|\Delta_i|^{1/q} \leq \Big( \sum_{n\in F_i} |\supp(y_n)|\Big)^{1/q} \leq \mu_l\]
while, by part (ii) of Subclaim \ref{6sc29}, we have
$\|P_{\seg_j}(z_l)\|\leq \mu_l^{-1} \cdot 2^{-l}$.
By plugging the previous two estimates into (\ref{6e36}), we get that
\begin{equation} \label{6e37}
\|P_{\seg_j}(z)\| \leq \|P_{\seg_j}(z_i)\|+ |\Delta_i|^{-1/q}\cdot 2^{-i}.
\end{equation}
Using the fact that $q\geq 2$ and that $(a+b)^q\leq 2^{q-1} a^q+ 2^{q-1}b^q$ for every
pair $a$ and $b$ of positive reals, inequality (\ref{6e37}) yields that
\[ \sum_{j\in\Delta_i} \|P_{\seg_j}(z)\|^q \leq
2^{q-1} \Big( \sum_{j\in\Delta_i} \|P_{\seg_j}(z_i)\|^q\Big) + 2^{q-1}\cdot 2^{-i}.\]
The family $(\seg_j)_{j\in\Delta_i}$ consists of pairwise incomparable non-empty
segments of $\bt$. Therefore, by part (i) of Subclaim \ref{6sc29}, we get that
\begin{equation} \label{6e38}
\sum_{j\in\Delta_i} \|P_{\seg_j}(z)\|^q \leq
2^{q-1} \|z_i\|^q + 2^{q-1}\cdot 2^{-i} \leq 2^{q-1}\cdot \Theta^q+ 2^{q-1}\cdot 2^{-i}.
\end{equation}

Summing up, we conclude that
\[ \sum_{j=1}^d \|P_{\seg_j}(z)\|^q = \sum_{i=1}^N \sum_{j\in\Delta_i} \|P_{\seg_j}(z)\|^q
\stackrel{(\ref{6e38})}{\leq} 2^{q-1}\cdot (N\cdot \Theta^q+1)\leq N \cdot (2\Theta)^q\]
and the result follows.
\end{proof}
As we have already indicated, having completed the proof of Claim \ref{6c28},
the proof of part (ii) of Theorem \ref{1t4} is completed.

Finally, we notice that the map $\varrho$ is not a co-analytic rank on
$\sss\sss(\ell_p,Y_p)$. For if not, by part (ii) of Theorem
\ref{1t4} and \cite[Theorem 35.23]{Kechris}, we would get that
$\sss\sss(\ell_p,Y_p)$ is a Borel subset of $\llll(\ell_p,Y_p)$.
This contradicts part (i) of Theorem \ref{1t4}.

The proof of Theorem \ref{1t4} is completed.


\end{document}